\renewcommand{\leq}{\ensuremath{\leqslant}}
\renewcommand{\geq}{\ensuremath{\geqslant}}
\renewcommand{\le}{\ensuremath{\leqslant}}
\newcommand{\minimize}[2]{\ensuremath{\underset{\substack{{#1}}}%
{\text{\rm minimize}}\;\;#2 }}
\newcommand{\EC}[2]{{\mathsf E}(#1\! \mid\! #2)}
\newcommand{\Frac}[2]{\displaystyle{\frac{#1}{#2}}} 
\newcommand{\scal}[2]{{\left\langle{{#1}\mid{#2}}\right\rangle}}
\newcommand{\menge}[2]{\big\{{#1}~\big |~{#2}\big\}} 
\newcommand{\Menge}[2]{\left\{{#1}~\Big|~{#2}\right\}} 
\newcommand{\GGG}{{\ensuremath{\boldsymbol{\mathsf G}}}}
\newcommand{\KKK}{{\ensuremath{\boldsymbol{\mathsf K}}}}
\newcommand{\HH}{\ensuremath{{\mathsf H}}}
\newcommand{\GG}{\ensuremath{{\mathsf G}}}
\newcommand{\FF}{\ensuremath{{\mathcal F}}}
\newcommand{\XX}{\ensuremath{\EuScript{X}}}
\newcommand{\WC}{\ensuremath{{\mathfrak W}}}
\newcommand{\SC}{\ensuremath{{\mathfrak S}}}
\newcommand{\XXX}{\ensuremath{\boldsymbol{\EuScript{X}}}}
\newcommand{\Sum}{\ensuremath{\displaystyle\sum}}
\newcommand{\emp}{\ensuremath{{\varnothing}}}
\newcommand{\Id}{\ensuremath{\text{\rm Id}}\,}
\newcommand{\RR}{\ensuremath{\mathbb{R}}}
\newcommand{\RP}{\ensuremath{\left[0,+\infty\right[}}
\newcommand{\BL}{\ensuremath{\EuScript B}\,}
\newcommand{\RPP}{\ensuremath{\left]0,+\infty\right[}}
\newcommand{\RX}{\ensuremath{\left]-\infty,+\infty\right]}}
\newcommand{\RXX}{\ensuremath{\left[-\infty,+\infty\right]}}
\newcommand{\EE}{\ensuremath{\mathsf E}}
\newcommand{\PP}{\ensuremath{\mathsf P}}
\newcommand{\as}{\ensuremath{\text{\rm $\PP$-a.s.}}}
\newcommand{\NN}{\ensuremath{\mathbb N}}
\newcommand{\weakly}{\ensuremath{\:\rightharpoonup\:}}
\newcommand{\pinf}{\ensuremath{{+\infty}}}
\newcommand{\dom}{\ensuremath{\text{\rm dom}\,}}
\newcommand{\prox}{\ensuremath{\text{\rm prox}}}
\newcommand{\Fix}{\ensuremath{\text{\rm Fix}\,}}
\newcommand{\gra}{\ensuremath{\text{\rm gra}\,}}
\newcommand{\infconv}{\ensuremath{\mbox{\small$\,\square\,$}}}
\newcommand{\rzeroun}{\ensuremath{\left]0,1\right]}}   
\newcommand{\argmind}[2]{\ensuremath{\underset{\substack{{#1}}}%
{\text{argmin}}\;\left(#2\right)}}
\newtheorem{theorem}{Theorem}[section]
\newtheorem{proposition}[theorem]{Proposition}
\theoremstyle{plain}{\theorembodyfont{\rmfamily}%
}
\theoremstyle{plain}{\theorembodyfont{\rmfamily}%
}
\theoremstyle{plain}{\theorembodyfont{\rmfamily}%
\newtheorem{algorithm}[theorem]{Algorithm}}
\theoremstyle{plain}{\theorembodyfont{\rmfamily}%
\newtheorem{problem}[theorem]{Problem}}
\theoremstyle{plain}{\theorembodyfont{\rmfamily}%
}
\theoremstyle{plain}{\theorembodyfont{\rmfamily}%
\newtheorem{remark}[theorem]{Remark}}
\theoremstyle{plain}{\theorembodyfont{\rmfamily}%
}
\theoremstyle{plain}{\theorembodyfont{\rmfamily}%
}
\numberwithin{equation}{section}
\begin{document}

\title{\sffamily Stochastic Approximations and Perturbations in
Forward-Backward Splitting for Monotone Operators\thanks{Contact 
author: 
P. L. Combettes, {plc@ljll.math.upmc.fr}, phone: +33 1 4427 
6319, fax: +33 1 4427 7200. This work was supported by the 
CNRS MASTODONS project under grant 2013MesureHD and by the 
CNRS Imag'in project under grant 2015OPTIMISME.}}
\author{Patrick L. Combettes$^{1}$ and 
Jean-Christophe Pesquet$^2$
\\[3mm]
\small
\small $\!^1$Sorbonne Universit\'es -- UPMC Univ. Paris 06\\
\small UMR 7598, Laboratoire Jacques-Louis Lions\\
\small F-75005 Paris, France\\
\small {plc@ljll.math.upmc.fr}
\\[3mm]
\small
\small $\!^2$Universit\'e Paris-Est\\
\small Laboratoire d'Informatique Gaspard Monge -- CNRS UMR 8049\\
\small F-77454, Marne la Vall\'ee Cedex 2, France\\
\small {jean-christophe.pesquet@univ-paris-est.fr}
}

\date{~}

\maketitle

\vskip 2mm

\begin{abstract}
\noindent

We investigate the asymptotic behavior of a stochastic version of
the forward-backward splitting algorithm for finding a zero of 
the sum of a maximally monotone set-valued operator and a 
cocoercive operator in Hilbert spaces. 
Our general setting features stochastic approximations of 
the cocoercive operator and stochastic perturbations in the 
evaluation of the resolvents of the set-valued operator. In
addition, relaxations and not necessarily vanishing proximal 
parameters are allowed. Weak and strong almost sure convergence 
properties of the iterates is
established under mild conditions on the underlying stochastic
processes. Leveraging these results, we also establish the almost 
sure convergence of the iterates of a stochastic variant of 
a primal-dual proximal splitting method for composite minimization 
problems.
\end{abstract}

{\bfseries Keywords.}
convex optimization, 
forward-backward algorithm, 
monotone operators, 
primal-dual algorithm, 
proximal gradient method,
stochastic approximation

\newpage
\section{Introduction}

Throughout the paper, $\HH$ is a separable real Hilbert space with 
scalar product $\scal{\cdot}{\cdot}$, associated norm 
$\|\cdot\|$, and Borel $\sigma$-algebra $\mathcal{B}$. 

A large array of problems arising in Hilbertian nonlinear analysis 
are captured by the following simple formulation.

\begin{problem}
\label{prob:1}
Let $\mathsf{A}\colon\HH\to 2^\HH$ be a set-valued maximally 
monotone operator, let $\vartheta\in\RPP$, and let 
$\mathsf{B}\colon\HH\to\HH$ be a $\vartheta$-cocoercive operator,
i.e.,
\begin{equation}
\label{e:2015-02-12a}
(\forall\mathsf{x}\in\HH)(\forall\mathsf{y}\in\HH)\quad
\scal{\mathsf{x}-\mathsf{y}}{\mathsf{B}\mathsf{x}-\mathsf{B}
\mathsf{y}}\geq\vartheta\|\mathsf{B}\mathsf{x}-
\mathsf{B}\mathsf{y}\|^2,
\end{equation}
such that
\begin{equation}
\mathsf{F}=\menge{\mathsf{z}\in\HH}{\mathsf{0}\in\mathsf{A}
\mathsf{z}+\mathsf{B}\mathsf{z}}\neq \emp.
\end{equation}
The problem is to find a point in $\mathsf{F}$.
\end{problem}

Instances of Problem~\ref{prob:1} are found in areas such as
evolution inclusions \cite{Sico10},
optimization \cite{Livre1,Lema96,Tsen90},
Nash equilibria \cite{Bric13},
image recovery \cite{Jmiv11,Chaa11,Svva10}, 
inverse problems \cite{Byrn14,Chau07},
signal processing \cite{Smms05},
statistics \cite{Devi11},
machine learning \cite{Duch09},
variational inequalities \cite{Facc03,Tsen91},
mechanics \cite{Merc79,Merc80}, 
and structure design \cite{Tali15}.
For instance, an important specialization of 
Problem~\ref{prob:1} in the
context of convex optimization is the following
\cite[Section~27.3]{Livre1}.

\begin{problem}
\label{prob:9}
Let $\mathsf{f}\colon\HH\to\RX$ be a proper lower semicontinuous 
convex function, let $\vartheta\in\RPP$, and let 
$\mathsf{g}\colon\HH\to\RR$ be a differentiable convex function 
such that $\nabla g$ is $\vartheta^{-1}$-Lipschitz 
continuous on $\HH$. The problem is to
\begin{equation}
\label{e:2015-01-20p}
\minimize{\mathsf{x}\in\HH}
{\mathsf{f}(\mathsf{x})+\mathsf{g}(\mathsf{x})},
\end{equation}
under the assumption that 
$\mathsf{F}=\text{Argmin}(\mathsf{f}+\mathsf{g})\neq\emp$.
\end{problem}

A standard method to solve Problem~\ref{prob:1} is the 
forward-backward algorithm \cite{Opti04,Lema96,Tsen91}, 
which constructs a sequence $(\mathsf{x}_n)_{n\in\NN}$ in 
$\HH$ by iterating
\begin{equation}
\label{e:2015-02-14a}
(\forall n\in\NN) \quad \mathsf{x}_{n+1} =
\mathsf{J}_{\gamma_n
\mathsf{A}}(\mathsf{x}_n-\gamma_n
\mathsf{B}\mathsf{x}_n),\quad
\text{where}\quad 0<\gamma_n<2\vartheta.
\end{equation}
Recent theoretical advances on deterministic versions of this 
algorithm can be found in \cite{Botr15,Cham15,Opti14,Yama15}.
Let us also stress that a major motivation for studying the
forward-backward algorithm is that it can be applied not only to
Problem~\ref{prob:1} \emph{per se}, but also to systems of 
coupled monotone inclusions via product space reformulations 
\cite{Sico10}, to strongly monotone composite inclusions problems 
via duality arguments \cite{Svva10,Opti14}, and to 
primal-dual composite problems via renorming in the primal-dual
space \cite{Opti14,Bang13}. Thus, new developments on 
\eqref{e:2015-02-14a} lead to new algorithms for solving
these problems.

Our paper addresses the following stochastic version of 
\eqref{e:2015-02-14a} in which, at each iteration $n$,
$u_n$ stands for a stochastic approximation to $\mathsf{B}x_n$
and $a_n$ stands for a stochastic perturbation modeling the 
approximate implementation of the resolvent operator 
$\mathsf{J}_{\gamma_n\mathsf{A}}$. 
Let $(\Omega,\FF,\PP)$ be the underlying probability space. 
An $\HH$-valued random variable is a measurable map 
$x\colon(\Omega,\FF)\to(\HH,\mathcal{B})$ and,
for every $p\in\left[1,\pinf\right[$,
$L^p(\Omega,\FF,\PP;\HH)$ denotes the space of 
equivalence classes of $\HH$-valued random variable $x$ such that 
$\int_{\Omega}\|x\|^pd\PP<\pinf$.

\begin{algorithm}
\label{algo:1}
Consider the setting of Problem~\ref{prob:1}.
Let $x_0$, $(u_n)_{n\in\NN}$, and $(a_n)_{n\in\NN}$ be random 
variables in $L^2(\Omega,\FF,\PP;\HH)$, let 
$(\lambda_n)_{n\in\NN}$ be a sequence in $\rzeroun$, and let
$(\gamma_n)_{n\in\NN}$ be a sequence in 
$\left]0,2\vartheta\right[$. Set
\begin{equation}
\label{e:FB}
(\forall n\in\NN) \quad x_{n+1} =
x_n+\lambda_n\big(\mathsf{J}_{\gamma_n \mathsf{A}}
(x_n-\gamma_nu_n)+a_n-x_n\big).
\end{equation}
\end{algorithm}

The first instances of the stochastic iteration \eqref{e:FB} can be 
traced back to \cite{Robi51} in the context of the gradient method, 
i.e., when $\mathsf{A}=\mathsf{0}$ and $\mathsf{B}$ is the 
gradient of a convex function. 
Stochastic approximations in the gradient method 
were then investigated in the Russian literature of the late 
1960s and early 1970s 
\cite{Ermo69,Ermo66,Ermo67,Guse71,Nekr74,Shor85}. 
Stochastic gradient methods have also been used extensively in 
adaptive signal processing, in control, and in machine learning,
e.g., \cite{Bach11,Kush03,Widr03}. More generally, proximal 
stochastic gradient methods have been applied to various problems; 
see for instance \cite{Atch14,Duch09,Ros14b,SS2013,Xiao14}. 

The objective of the present paper is to provide an analysis
of the stochastic forward-backward method in the context of
Algorithm~\ref{algo:1}. Almost sure convergence of the
iterates $(x_n)_{n\in\NN}$ to a solution to Problem~\ref{prob:1}
will be established under general conditions on the sequences 
$(u_n)_{n\in\NN}$, $(a_n)_{n\in\NN}$, $(\gamma_n)_{n\in\NN}$, and 
$(\lambda_n)_{n\in\NN}$. 
In particular, a feature of our analysis is that it allows for
relaxation parameters and it does 
not require that the proximal parameter sequence 
$(\gamma_n)_{n\in\NN}$ be vanishing.
Our proofs are based on properties of stochastic quasi-Fej\'er 
iterations \cite{Siop15}, for which we provide a novel convergence 
result. 

The organization of the paper is as follows. The notation is 
introduced in Section~\ref{sec:2}. Section~\ref{sec:3} provides an
asymptotic principle which will be used in Section~\ref{sec:4} to
present the main result on the weak and strong convergence of the 
iterates of Algorithm~\ref{algo:1}. Finally, Section~\ref{sec:5} 
deals with applications and proposes a stochastic primal-dual 
method.

\section{Notation}
\label{sec:2}

$\Id$ denotes the identity operator on $\HH$ and $\weakly$ and 
$\to$ denote, respectively, weak and strong convergence. The sets 
of weak and strong sequential cluster points of a sequence 
$(\mathsf{x}_n)_{n\in\NN}$ in $\HH$ are denoted by 
$\WC(\mathsf{x}_n)_{n\in\NN}$ and $\SC(\mathsf{x}_n)_{n\in\NN}$, 
respectively. 

Let $\mathsf{A}\colon\HH\to 2^{\HH}$ be a set-valued operator.
The domain of $\mathsf{A}$ is 
$\dom\mathsf{A}=\menge{\mathsf{x}\in\HH}{\mathsf{A}\mathsf{x}
\neq\emp}$ and the graph of $\mathsf{A}$ is 
$\gra\mathsf{A}=
\menge{(\mathsf{x},\mathsf{u})\in\HH\times\HH}{\mathsf{u}\in
\mathsf{A}\mathsf{x}}$.
The inverse $\mathsf{A}^{-1}$ of $\mathsf{A}$ is defined via
the equivalences $(\forall(\mathsf{x},\mathsf{u})\in\HH^2)$ 
$\mathsf{x}\in\mathsf{A}^{-1}\mathsf{u}$ $\Leftrightarrow$
$\mathsf{u}\in\mathsf{A}\mathsf{x}$.
The resolvent of $\mathsf{A}$ is 
$\mathsf{J}_\mathsf{A}=(\Id+\mathsf{A})^{-1}$.
If $\mathsf{A}$ is maximally monotone, then 
$\mathsf{J}_\mathsf{A}$ is single-valued and firmly nonexpansive,
with $\dom\mathsf{J}_\mathsf{A}=\HH$. 
An operator $\mathsf{A}\colon\HH\to 2^{\HH}$ is demiregular 
at $\mathsf{x}\in\dom\mathsf{A}$ if, for every sequence
$(\mathsf{x}_n,\mathsf{u}_n)_{n\in\NN}$ in 
$\gra\mathsf{A}$ and every $\mathsf{u}\in\mathsf{A}\mathsf{x}$ 
such that $\mathsf{x}_n\weakly\mathsf{x}$ and 
$\mathsf{u}_n\to\mathsf{u}$, we have $\mathsf{x}_n\to\mathsf{x}$
\cite{Sico10}. Let $\GG$ be a real Hilbert space.
We denote by $\BL(\HH,\GG)$ the space of bounded linear operators 
from $\HH$ to $\GG$, and we set $\BL(\HH)=\BL(\HH,\HH)$.
The adjoint of $\mathsf{L}\in \BL(\HH,\GG)$ 
is denoted by $\mathsf{L}^*$.
For more details on convex analysis and monotone 
operator theory, see \cite{Livre1}.

Let $(\Omega,\FF,\PP)$ denote the underlying probability space. 
The smallest $\sigma$-algebra generated by a family $\Phi$ of 
random variables is denoted by $\sigma(\Phi)$.
Given a sequence $(x_n)_{n\in\NN}$ of $\HH$-valued 
random variables, we denote by 
$\mathscr{X}=(\XX_n)_{n\in\NN}$ a sequence
of sigma-algebras such that
\begin{equation}
\label{e:2013-11-14}
(\forall n\in\NN)\quad\XX_n\subset\FF\quad\text{and}\quad
\sigma(x_0,\ldots,x_n)\subset\XX_n\subset\XX_{n+1}.
\end{equation}
Furthermore, we denote by $\ell_+(\mathscr{X})$ the set of sequences 
of $\RP$-valued random variables $(\xi_n)_{n\in\NN}$ such that,
for every $n\in\NN$, $\xi_n$ is $\XX_n$-measurable, and we define
\begin{equation}
\label{e:2013-11-13}
(\forall p\in\RPP)\quad\ell_+^p(\mathscr{X})=
\Menge{(\xi_n)_{n\in\NN}\in\ell_+(\mathscr{X})}
{\sum_{n\in\NN}\xi_n^p<\pinf\;\as},
\end{equation}
and
\begin{equation}
\label{e:2013-11-12}
\ell_+^\infty({\mathscr{X}})=
\Menge{(\xi_n)_{n\in\NN}\in\ell_+(\mathscr{X})}
{\sup_{n\in\NN}\xi_n<\pinf\; \as}.
\end{equation}
Equalities and inequalities involving random variables will always
be understood to hold $\PP$-almost surely, although this will not
always be expressly mentioned. 
Let $\mathcal{E}$ be a sub sigma-algebra of $\FF$,
let $x\in L^1(\Omega,\FF,\PP;\HH)$, and let 
$y\in L^1(\Omega,\mathcal{E},\PP;\HH)$. Then $y$ is the
conditional expectation of $x$ with respect to $\mathcal{E}$ if
$(\forall E\in\mathcal{E})$ $\int_Exd\PP=\int_Eyd\PP$; in this case
we write $y=\EC{x}{\mathcal{E}}$. We have
\begin{equation}
\label{e:2015-01-21}
\big(\forall x\in L^1(\Omega,\FF,\PP;\HH)\big)\quad
\|\EC{x}{\mathcal{E}}\|\leq\EC{\|x\|}{\mathcal{E}}.
\end{equation}
In addition,
$L^2(\Omega,\FF,\PP;\HH)$ is a Hilbert space and
\begin{equation}
\label{e:2015-01-22}
\big(\forall x\in L^2(\Omega,\FF,\PP;\HH)\big)\quad
\begin{cases}
\|\EC{x}{\mathcal{E}}\|^2\leq\EC{\|x\|^2}{\mathcal{E}}\\
(\forall\mathsf{u}\in\HH)\quad
\EC{\scal{x}{\mathsf{u}}}{\mathcal{E}}=
\scal{\EC{x}{\mathcal{E}}}{\mathsf{u}}.
\end{cases}
\end{equation}
Geometrically, if $x\in L^2(\Omega,\FF,\PP;\HH)$, 
$\EC{x}{\mathcal{E}}$ is the projection of $x$ onto
$L^2(\Omega,\mathcal{E},\PP;\HH)$.
For background on probability in Hilbert spaces, see 
\cite{Fort95,Ledo91}.

\section{An asymptotic principle}
\label{sec:3}

In this section, we establish an asymptotic principle which will
lay the foundation for the convergence analysis of our stochastic
forward-backward algorithm. First, we need the following result.

\begin{proposition}
\label{p:1}
Let $\mathsf{F}$ be a nonempty closed subset of $\HH$, let 
$\phi\colon\RP\to\RP$ be a strictly increasing 
function such that $\lim_{t\to\pinf}\phi(t)=\pinf$, let 
$(x_n)_{n\in\NN}$ be a sequence of $\HH$-valued random variables,
and let $(\XX_n)_{n\in\NN}$ be a sequence
of sub-sigma-algebras of $\FF$ such that
\begin{equation}
\label{e:2013-11-14'}
(\forall n\in\NN)\quad\sigma(x_0,\ldots,x_n)\subset
\XX_n\subset\XX_{n+1}.
\end{equation}
Suppose that, for every $\mathsf{z}\in\mathsf{F}$, there exist
$(\vartheta_n(\mathsf{z}))_{n\in\NN}\in\ell_+({\mathscr{X}})$,
$(\chi_n(\mathsf{z}))_{n\in\NN}\in\ell_+^1({\mathscr{X}})$,
and $(\eta_n(\mathsf{z}))_{n\in\NN}\in\ell_+^1({\mathscr{X}})$
such that
\begin{equation}
\label{e:sqf1}
(\forall n\in\NN)\quad
\EC{\phi(\|x_{n+1}-\mathsf{z}\|)}{\XX_n}+\vartheta_n(\mathsf{z}) 
\leq (1+\chi_n(\mathsf{z}))\phi(\|x_n-\mathsf{z}\|)+
\eta_n(\mathsf{z})\;\as
\end{equation}
Then the following hold:
\begin{enumerate}
\item
\label{p:1i}
$(\forall\mathsf{z}\in\mathsf{F})$ 
$\big[\:\sum_{n\in\NN}\vartheta_n(\mathsf{z})<\pinf\:\as\big]$
\item
\label{p:1ii}
$(x_n)_{n\in\NN}$ is bounded $\as$
\item
\label{p:1iibis}
There exists $\widetilde{\Omega}\in\FF$ such that 
$\PP(\widetilde{\Omega})=1$ and, for every 
$\omega\in\widetilde{\Omega}$ and every $\mathsf{z}\in\mathsf{F}$,
$(\|x_n(\omega)-\mathsf{z}\|)_{n\in\NN}$ converges.
\item
\label{p:1iii}
Suppose that $\WC(x_n)_{n\in\NN}\subset\mathsf{F}\;\:\as$ Then 
$(x_n)_{n\in\NN}$ converges weakly $\as$ to an $\mathsf{F}$-valued 
random variable.
\item
\label{p:1iv}
Suppose that $\SC(x_n)_{n\in\NN}\cap\mathsf{F}\neq\emp\;\:\as$
Then $(x_n)_{n\in\NN}$ converges strongly $\as$ to an 
$\mathsf{F}$-valued random variable.
\item
\label{p:1v}
Suppose that $\SC(x_n)_{n\in\NN}\neq\emp\;\:\as$ and that 
$\WC(x_n)_{n\in\NN}\subset\mathsf{F}\;\:\as$ Then 
$(x_n)_{n\in\NN}$ converges strongly $\as$ to an $\mathsf{F}$-valued 
random variable.
\end{enumerate}
\end{proposition}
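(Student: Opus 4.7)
The plan is to apply the Robbins--Siegmund almost supermartingale convergence theorem to the nonnegative adapted process $(\phi(\|x_n-\mathsf{z}\|))_{n\in\NN}$ for each fixed $\mathsf{z}\in\mathsf{F}$, and then leverage the separability of $\HH$ to upgrade the resulting pointwise (in $\mathsf{z}$) a.s.\ statements into a single full-measure event on which everything holds for all $\mathsf{z}\in\mathsf{F}$ simultaneously, as required for \ref{p:1iibis}.

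\textbf{Parts \ref{p:1i} and \ref{p:1ii}.} Fix $\mathsf{z}\in\mathsf{F}$. The inequality \eqref{e:sqf1}, combined with $(\chi_n(\mathsf{z}))\in\ell_+^1(\mathscr{X})$ and $(\eta_n(\mathsf{z}))\in\ell_+^1(\mathscr{X})$, is precisely the hypothesis of the Robbins--Siegmund lemma. It yields directly $\sum_n\vartheta_n(\mathsf{z})<\pinf$ a.s., which is \ref{p:1i}, and also a.s.\ convergence of $(\phi(\|x_n-\mathsf{z}\|))_{n\in\NN}$ to a finite limit. Since $\phi(t)\to\pinf$ as $t\to\pinf$, this finiteness forces $(\|x_n-\mathsf{z}\|)_{n\in\NN}$ to be bounded a.s., yielding \ref{p:1ii}.

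\textbf{Part \ref{p:1iibis}.} This is the main obstacle: the exceptional null set provided above depends on $\mathsf{z}$, and $\mathsf{F}$ may be uncountable. Since $\HH$ is separable, so is $\mathsf{F}$ as a subspace; I would fix a countable dense subset $D\subset\mathsf{F}$ and take $\widetilde{\Omega}$ to be the (full-measure) intersection over $\mathsf{z}\in D$ of the events on which $(\phi(\|x_n-\mathsf{z}\|))$ converges in $\RP$. On $\widetilde{\Omega}$, convergence of $(\phi(\|x_n(\omega)-\mathsf{z}\|))$ transfers to $(\|x_n(\omega)-\mathsf{z}\|)$ for every $\mathsf{z}\in D$: indeed, if two cluster points $t^*<t^{**}$ of a bounded nonnegative sequence $(t_n)$ existed, then picking $t^*<s_1<s_2<t^{**}$ would produce infinitely many indices with $\phi(t_n)<\phi(s_1)$ and infinitely many with $\phi(t_n)>\phi(s_2)$, contradicting convergence of $(\phi(t_n))$. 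The extension from $\mathsf{z}\in D$ to arbitrary $\mathsf{z}\in\mathsf{F}$ follows from the reverse triangle inequality $\big|\|x_n-\mathsf{z}\|-\|x_n-\mathsf{z}'\|\big|\leq\|\mathsf{z}-\mathsf{z}'\|$, which makes the oscillation of $(\|x_n(\omega)-\mathsf{z}\|)$ bounded by $2\|\mathsf{z}-\mathsf{z}'\|$ for any approximating $\mathsf{z}'\in D$.

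\textbf{Parts \ref{p:1iii}, \ref{p:1iv}, and \ref{p:1v}.} These reduce to pathwise applications of the deterministic quasi-Fej\'er machinery on $\widetilde{\Omega}$. For \ref{p:1iii}, Opial's lemma applied trajectorywise uses convergence of $(\|x_n-\mathsf{z}\|)$ for every $\mathsf{z}\in\mathsf{F}$, together with $\WC(x_n)\subset\mathsf{F}$, to force a unique weak cluster point on each trajectory; the resulting weak limit is $\mathsf{F}$-valued, and its measurability follows from standard arguments on sequential weak limits of $\HH$-valued measurable functions. For \ref{p:1iv}, any strong cluster point $\mathsf{z}\in\mathsf{F}$ furnishes a subsequence with $\|x_{n_k}-\mathsf{z}\|\to 0$, and \ref{p:1iibis} propagates this limit to the whole sequence. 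Finally, in \ref{p:1v}, the weak limit $x$ from \ref{p:1iii} must coincide with any strong cluster point (strong convergence implying weak), so the assumption $\SC(x_n)\neq\emp$ places $x\in\SC(x_n)\cap\mathsf{F}$, reducing to \ref{p:1iv}.
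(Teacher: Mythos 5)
Your proposal is correct and follows essentially the same route as the paper, which simply defers to \cite[Proposition~2.3]{Siop15}: there, too, the argument is the Robbins--Siegmund lemma applied to $\phi(\|x_n-\mathsf{z}\|)$ for fixed $\mathsf{z}$, followed by the separability/countable-dense-subset device to obtain a single full-measure set on which $(\|x_n(\omega)-\mathsf{z}\|)_{n\in\NN}$ converges for all $\mathsf{z}\in\mathsf{F}$, and a pathwise Opial-type argument for the convergence claims. The only point treated more carefully in the reference is the measurability of the weak limit, which you correctly flag as standard.
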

\begin{proof}
This is \cite[Proposition~2.3]{Siop15} in the case when
$(\forall n\in\NN)$ $\XX_n=\sigma(x_0,\ldots,x_n)$. However, the
proof remains the same in the more general setting of
\eqref{e:2013-11-14}.
\end{proof}

The following result describes the asymptotic behavior of an
abstract stochastic recursion in Hilbert spaces.

\begin{theorem}
\label{t:1}
Let ${\mathsf F}$ be a nonempty closed subset of $\HH$ and let 
$(\lambda_n)_{n\in\NN}$ be a sequence in $\rzeroun$. 
In addition, let $(x_n)_{n\in\NN}$, $(t_n)_{n\in\NN}$, 
$(c_n)_{n\in\NN}$, and $(d_n)_{n\in\NN}$ be sequences in
$L^2(\Omega,\FF,\PP;\HH)$. Suppose that the following 
are satisfied:
\begin{enumerate}[label=\rm(\alph*)]
\item
\label{a:1i-}
$\mathscr{X}=(\XX_n)_{n\in\NN}$ is a sequence of 
sub-sigma-algebras of $\FF$ 
such that $(\forall n\!\in\!\NN)$ $\sigma(x_0,\ldots,x_n)\subset
\XX_n\subset\XX_{n+1}$.
\item
\label{a:1i}
$(\forall n\in\NN)$ $x_{n+1}=x_n+\lambda_n(t_n+c_n-x_n)$.
\item
\label{a:1ii}
$\sum_{n\in\NN}\lambda_n\sqrt{\EC{\|c_n\|^2}{\XX_n}}<\pinf$ and 
$\sum_{n\in\NN}\sqrt{\lambda_n\EC{\|d_n\|^2}{\XX_n}}<\pinf$.
\item
\label{a:1iii}
For every $\mathsf{z}\in\mathsf{F}$, there exist a sequence
$(s_n(\mathsf{z}))_{n\in\NN}$ of $\HH$-valued random variables,
$(\theta_{1,n}(\mathsf{z}))_{n\in\NN}\in\ell_+({\mathscr{X}})$, 
$(\theta_{2,n}(\mathsf{z}))_{n\in\NN}\in\ell_+({\mathscr{X}})$, 
$(\mu_{1,n}(\mathsf{z}))_{n\in\NN}\in\ell_+^\infty({\mathscr{X}})$,
$(\mu_{2,n}(\mathsf{z}))_{n\in\NN}\in\ell_+^\infty({\mathscr{X}})$,
$(\nu_{1,n}(\mathsf{z}))_{n\in\NN}\in\ell_+^\infty({\mathscr{X}})$,
and 
$(\nu_{2,n}(\mathsf{z}))_{n\in\NN}\in\ell_+^\infty({\mathscr{X}})$
such that 
$(\lambda_n\mu_{1,n}(\mathsf{z}))_{n\in\NN}\in
\ell_+^1({\mathscr{X}})$,
$(\lambda_n\mu_{2,n}(\mathsf{z}))_{n\in\NN}\in
\ell_+^1({\mathscr{X}})$,
$(\lambda_n\nu_{1,n}(\mathsf{z}))_{n\in\NN}\in\ell_+^{1/2}
({\mathscr{X}})$, 
$(\lambda_n\nu_{2,n}(\mathsf{z}))_{n\in\NN}\in\ell_+^{1/2}
({\mathscr{X}})$, 
\begin{equation}
\label{e:condi1}
(\forall n\in\NN)\;\;
\EC{\|t_n-\mathsf{z}\|^2}{\XX_n} +\theta_{1,n}(\mathsf{z})\le
(1+\mu_{1,n}(\mathsf{z})) \EC{\|s_n(\mathsf{z})+d_n\|^2}{\XX_n}+
\nu_{1,n}(\mathsf{z}),
\end{equation}
and
\begin{equation}
\label{e:condi2}
(\forall n\in\NN)\;\;
\EC{\|s_n(\mathsf{z})\|^2}{\XX_n}+ \theta_{2,n}(\mathsf{z})\leq
(1+\mu_{2,n}(\mathsf{z}))\|x_n-\mathsf{z}\|^2+\nu_{2,n}(\mathsf{z}).
\end{equation}
\end{enumerate}
Then the following hold:
\begin{enumerate}
\item
\label{t:1i}
$(\forall\mathsf{z}\in\mathsf{F})$
$\big[\:\sum_{n\in\NN} \lambda_n \theta_{1,n}(\mathsf{z}) <\pinf\;
\;\text{and}\;\sum_{n\in\NN} \lambda_n 
\theta_{2,n}(\mathsf{z})<\pinf\quad\as\:\big]$.
\item 
\label{t:1ii}
$\sum_{n\in\NN}\lambda_n(1-\lambda_n)\EC{\|t_n-x_n\|^2}{\XX_n}
<\pinf\;\as$
\item 
\label{t:1iii}
Suppose that $\WC(x_n)_{n\in\NN}\subset\mathsf{F}\;\:\as$ 
Then $(x_n)_{n\in\NN}$ converges weakly $\as$ to an 
$\mathsf{F}$-valued random variable.
\item
\label{t:1iv}
Suppose that $\SC(x_n)_{n\in\NN}\cap\mathsf{F}\neq\emp\;\:\as$
Then $(x_n)_{n\in\NN}$ converges strongly $\as$ to an 
$\mathsf{F}$-valued random variable.
\item
\label{t:1v}
Suppose that $\SC(x_n)_{n\in\NN}\neq\emp\;\:\as$ and that 
$\WC(x_n)_{n\in\NN}\subset\mathsf{F}\;\:\as$ Then 
$(x_n)_{n\in\NN}$ converges strongly $\as$ to an $\mathsf{F}$-valued 
random variable.
\end{enumerate}
\end{theorem}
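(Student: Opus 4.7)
The plan is to reduce the theorem to Proposition~\ref{p:1} applied with $\phi(t)=t^2$, by deriving a stochastic quasi-Fej\'er inequality
\begin{equation*}
\EC{\|x_{n+1}-\mathsf{z}\|^2}{\XX_n}+\vartheta_n(\mathsf{z})\leq \big(1+\chi_n(\mathsf{z})\big)\|x_n-\mathsf{z}\|^2+\eta_n(\mathsf{z}),
\end{equation*}
where $(\chi_n(\mathsf{z}))_{n\in\NN},(\eta_n(\mathsf{z}))_{n\in\NN}\in\ell_+^1(\mathscr{X})$ almost surely and $\vartheta_n(\mathsf{z})$ dominates, up to a uniformly bounded factor, the sum $\lambda_n\theta_{1,n}(\mathsf{z})+\lambda_n\theta_{2,n}(\mathsf{z})+\lambda_n(1-\lambda_n)\EC{\|t_n-x_n\|^2}{\XX_n}$. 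Once this inequality is in hand, \ref{t:1i} and~\ref{t:1ii} are immediate consequences of Proposition~\ref{p:1}\ref{p:1i}, while \ref{t:1iii}--\ref{t:1v} are direct transcriptions of Proposition~\ref{p:1}\ref{p:1iii}--\ref{p:1v}.

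To derive the inequality, I would first set $y_n=x_n+\lambda_n(t_n-x_n)$, so that $x_{n+1}=y_n+\lambda_n c_n$, and exploit the convex-combination identity
\begin{equation*}
\|y_n-\mathsf{z}\|^2=(1-\lambda_n)\|x_n-\mathsf{z}\|^2+\lambda_n\|t_n-\mathsf{z}\|^2-\lambda_n(1-\lambda_n)\|t_n-x_n\|^2,
\end{equation*}
which after conditioning on $\XX_n$ isolates the negative term that will feed $\vartheta_n$. The extra terms induced by $\lambda_n c_n$ are controlled through the Young bound $\|a+b\|^2\leq(1+\alpha)\|a\|^2+(1+1/\alpha)\|b\|^2$ applied with the $\XX_n$-measurable choice $\alpha=\lambda_n\sqrt{\EC{\|c_n\|^2}{\XX_n}}$; this produces residuals of order $\lambda_n\sqrt{\EC{\|c_n\|^2}{\XX_n}}$ and $\lambda_n^2\EC{\|c_n\|^2}{\XX_n}$, both summable by~\ref{a:1ii}. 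Then I would bound $\EC{\|t_n-\mathsf{z}\|^2}{\XX_n}$ by chaining \eqref{e:condi1} and~\eqref{e:condi2}, splitting $\EC{\|s_n+d_n\|^2}{\XX_n}$ via a second Young step with the $\XX_n$-measurable parameter $\beta_n=\sqrt{\lambda_n\EC{\|d_n\|^2}{\XX_n}}\wedge 1$, calibrated so that $\lambda_n\beta_n$ and $\lambda_n(1+1/\beta_n)\EC{\|d_n\|^2}{\XX_n}$ are both summable in view of~\ref{a:1ii} while $(1+\beta_n)$ stays bounded. Assembling, the coefficient of $\|x_n-\mathsf{z}\|^2$ becomes $1+\chi_n(\mathsf{z})$ where $\chi_n(\mathsf{z})$ is a linear combination of $\lambda_n\sqrt{\EC{\|c_n\|^2}{\XX_n}}$, $\lambda_n\mu_{1,n}(\mathsf{z})$, $\lambda_n\mu_{2,n}(\mathsf{z})$, $\lambda_n\beta_n$, and bounded cross products (bounded because $(\mu_{i,n}(\mathsf{z}))\in\ell_+^\infty(\mathscr{X})$), each summable by~\ref{a:1iii}; the additive remainder $\eta_n(\mathsf{z})$ gathers $\lambda_n\nu_{1,n}(\mathsf{z})$, $\lambda_n\nu_{2,n}(\mathsf{z})$, and the $c_n,d_n$-residuals, whose summability follows from~\ref{a:1ii}--\ref{a:1iii} together with the inclusion $\ell_+^{1/2}(\mathscr{X})\subset\ell_+^1(\mathscr{X})$.

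The main technical obstacle is this bookkeeping: hypothesis~\ref{a:1ii} imposes \emph{asymmetric} summability on the $c_n$ side ($\sum\lambda_n\sqrt{\EC{\|c_n\|^2}{\XX_n}}<\pinf$) and on the $d_n$ side ($\sum\sqrt{\lambda_n\EC{\|d_n\|^2}{\XX_n}}<\pinf$), so the Young parameters $\alpha$ and $\beta_n$ above must be chosen data-adaptively rather than as deterministic constants, in order for every cross term produced by chaining~\eqref{e:condi1} and~\eqref{e:condi2} to land in $\ell_+^1(\mathscr{X})$. Once the quasi-Fej\'er inequality is established, Proposition~\ref{p:1} closes the argument.
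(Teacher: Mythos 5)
Your proposal is correct, and although it ends where the paper does---at Proposition~\ref{p:1} with $\phi\colon t\mapsto t^2$---it gets there by a genuinely different mechanism. The paper runs a two-stage bootstrap: it first derives a \emph{first-order} quasi-Fej\'er inequality $\EC{\|x_{n+1}-\mathsf{z}\|}{\XX_n}\leq(1+\rho_n(\mathsf{z}))\|x_n-\mathsf{z}\|+\zeta_n(\mathsf{z})$ and applies Proposition~\ref{p:1}\ref{p:1ii} with $\phi\colon t\mapsto t$ to obtain the almost sure boundedness of $(\|x_n-\mathsf{z}\|)_{n\in\NN}$, $(\EC{\|s_n(\mathsf{z})\|^2}{\XX_n})_{n\in\NN}$, and $(\lambda_n\EC{\|t_n-\mathsf{z}\|^2}{\XX_n})_{n\in\NN}$; only then can it control the cross terms $\EC{\scal{s_n(\mathsf{z})}{d_n}}{\XX_n}$ and $\EC{\scal{t_n-\mathsf{z}}{c_n}}{\XX_n}$ by conditional Cauchy--Schwarz and park them in the \emph{additive} error $\eta_n(\mathsf{z})$ of an exact second-order expansion. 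You instead absorb those same cross terms into the \emph{multiplicative} perturbation $1+\chi_n(\mathsf{z})$ via Young's inequality with $\XX_n$-measurable data-adaptive parameters, which eliminates the preliminary stage and requires only one pass through Proposition~\ref{p:1}. Your calibration checks out: $(1+1/\alpha_n)\lambda_n^2\EC{\|c_n\|^2}{\XX_n}=\lambda_n^2\EC{\|c_n\|^2}{\XX_n}+\lambda_n\sqrt{\EC{\|c_n\|^2}{\XX_n}}$, $\lambda_n\beta_n\leq\sqrt{\lambda_n\EC{\|d_n\|^2}{\XX_n}}$, and $\lambda_n(1+1/\beta_n)\EC{\|d_n\|^2}{\XX_n}$ is $\as$ eventually equal to $\lambda_n\EC{\|d_n\|^2}{\XX_n}+\sqrt{\lambda_n\EC{\|d_n\|^2}{\XX_n}}$ (the indices where $\beta_n=1$ occur only finitely often $\as$), so everything lands in $\ell_+^1(\mathscr{X})$ by \ref{a:1ii}, while the factors $(1+\alpha_n)$ and $(1+\beta_n)$ stay $\as$ bounded and only reinforce the negative terms feeding $\vartheta_n(\mathsf{z})$. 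Two points to tighten in a write-up: make $\alpha_n$ and $\beta_n$ strictly positive (on the $\XX_n$-measurable events where $\EC{\|c_n\|^2}{\XX_n}=0$ or $\EC{\|d_n\|^2}{\XX_n}=0$ the corresponding error vanishes and no Young step is needed), and state explicitly that $\ell_+^{1/2}(\mathscr{X})\subset\ell_+^{1}(\mathscr{X})$ is what makes $\lambda_n\nu_{i,n}(\mathsf{z})$ summable. What each approach buys: the paper's version keeps the second-order inequality exact and its error terms structurally simple, at the price of the auxiliary first-moment argument (which is where the full $\ell_+^{1/2}$ strength of the hypotheses on $(\lambda_n\nu_{i,n}(\mathsf{z}))_{n\in\NN}$ is actually consumed); your single-stage version has messier constants but is self-contained and, as a byproduct, goes through under the weaker assumption $(\lambda_n\nu_{i,n}(\mathsf{z}))_{n\in\NN}\in\ell_+^{1}(\mathscr{X})$, extending the observation made in the paper's remark following the theorem for the case $c_n=d_n=0$.
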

\begin{proof}
Let $\mathsf{z}\in\mathsf{F}$. By \eqref{e:2015-01-22} and 
\eqref{e:condi1},
\begin{align}
\label{e:ntnzb}
(\forall n\in\NN)\quad
\EC{\|t_n-\mathsf{z}\|}{\XX_n} 
&\leq\sqrt{\EC{\|t_n-\mathsf{z}\|^2}{\XX_n}}\nonumber\\ 
&\leq\sqrt{1+\mu_{1,n}(\mathsf{z})}
\sqrt{\EC{\|s_n(\mathsf{z})+d_n\|^2}{\XX_n}}+\sqrt{\nu_{1,n}
(\mathsf{z})}\nonumber\\
&\leq\Big(1+\frac{\mu_{1,n}(\mathsf{z})}{2}\Big)
\sqrt{\EC{\|s_n(\mathsf{z})+d_n\|^2}{\XX_n}}+
\sqrt{\nu_{1,n}(\mathsf{z})}.
\end{align}
On the other hand, according to the triangle inequality and
\eqref{e:condi2}, 
\begin{align}
\label{e:ntnzb2}
(\forall n\in\NN)\quad
\sqrt{\EC{\|s_n(\mathsf{z})+d_n\|^2}{\XX_n}}
&\leq\sqrt{\EC{\|s_n(\mathsf{z})\|^2}{\XX_n}}
+\sqrt{\EC{\|d_n\|^2}{\XX_n}}\nonumber\\
&\leq\sqrt{1+\mu_{2,n}(\mathsf{z})}\|x_n-\mathsf{z}\|
+\sqrt{\nu_{2,n}(\mathsf{z})}+\sqrt{\EC{\|d_n\|^2}{\XX_n}}\nonumber\\
&\leq\Big(1+\frac{\mu_{2,n}(\mathsf{z})}{2}\Big)\|x_n-\mathsf{z}\|
+\sqrt{\nu_{2,n}(\mathsf{z})}+\sqrt{\EC{\|d_n\|^2}{\XX_n}}.
\end{align}
Furthermore, \ref{a:1i} yields
\begin{equation}
(\forall n\in\NN)\quad
\|x_{n+1}-\mathsf{z}\|\le(1-\lambda_n)\|x_n-\mathsf{z}\|+
\lambda_n\|t_n-\mathsf{z}\|+\lambda_n\|c_n\|.
\end{equation}
Consequently, \eqref{e:ntnzb} and \eqref{e:ntnzb2} lead to
\begin{align}
\label{e:Rob1}
(\forall n\in\NN)\quad
\EC{\|x_{n+1}-\mathsf{z}\|}{\XX_n}
&\leq\; (1-\lambda_n) \|x_n-\mathsf{z}\|+\lambda_n
\EC{\|t_n-\mathsf{z}\|}{\XX_n} +\lambda_n
\EC{\|c_n\|}{\XX_n}\nonumber\\
&\leq\;(1+\rho_n(\mathsf{z})) \|x_n-\mathsf{z}\|+
\zeta_n(\mathsf{z}),
\end{align}
where
\begin{equation}
\rho_n(\mathsf{z}) =
\frac{\lambda_n}{2}\bigg(\mu_{1,n}(\mathsf{z})+\mu_{2,n}
(\mathsf{z})+\frac{\mu_{1,n}(\mathsf{z})\mu_{2,n}
(\mathsf{z})}{2}\bigg)
\end{equation}
and
\begin{equation}
\zeta_n(\mathsf{z})=\lambda_n
\sqrt{\nu_{1,n}(\mathsf{z})}+\lambda_n
\Big(1+\frac{\mu_{1,n}(\mathsf{z})}{2}\Big) 
\Big(\sqrt{\nu_{2,n}(\mathsf{z})}+
\sqrt{\EC{\|d_n\|^2}{\XX_n}}\Big)+\lambda_n\EC{\|c_n\|}{\XX_n}.
\end{equation}
Now set
\begin{equation}
\overline{\mu}_1(\mathsf{z})=\sup_{n\in\NN}\mu_{1,n}(\mathsf{z}).
\end{equation}
In view of \eqref{e:condi1} and \eqref{e:condi2}, we have
\begin{align}
\label{e:Rob1f}
2\sum_{n\in\NN} \rho_n(\mathsf{z})
&=\sum_{n\in\NN} \lambda_n\mu_{1,n}(\mathsf{z})+\sum_{n\in\NN}
\lambda_n\mu_{2,n}(\mathsf{z})+\frac12\sum_{n\in\NN}
\lambda_n\mu_{1,n}(\mathsf{z})\mu_{2,n}(\mathsf{z})\nonumber\\
&\leq  \sum_{n\in\NN} \lambda_n\mu_{1,n}(\mathsf{z})+\Big(1+
\frac{\overline{\mu}_1(\mathsf{z})}{2}\Big) \sum_{n\in\NN}
\lambda_n\mu_{2,n}(\mathsf{z})\nonumber\\
&<\pinf.
\end{align}
In addition, since \eqref{e:2015-01-22} yields
\begin{equation}
\label{e:l1absen}
(\forall n\in\NN) \quad
\EC{\|c_n\|}{\XX_n}\leq \sqrt{\EC{\|c_n\|^2}{\XX_n}},
\end{equation}
we derive from \ref{a:1ii} and \ref{a:1iii} that
\begin{align}
\label{e:l1absenp}
\sum_{n\in\NN} \zeta_n(\mathsf{z}) 
&\leq\sum_{n\in\NN} \sqrt{\lambda_n\nu_{1,n}(\mathsf{z})} +
\bigg(1+\frac{\overline{\mu}_1(\mathsf{z})}{2}\bigg)
\bigg(\sum_{n\in\NN}\sqrt{\lambda_n\nu_{2,n}(\mathsf{z})}
+\sum_{n\in\NN}\sqrt{\lambda_n\EC{\|d_n\|^2}{\XX_n}}\bigg)
\nonumber\\
&\quad\;+\sum_{n\in\NN}\lambda_n\sqrt{\EC{\|c_n\|^2}{\XX_n}}
\nonumber\\
&<\pinf. 
\end{align}
Using Proposition~\ref{p:1}\ref{p:1ii}, \eqref{e:Rob1}, 
\eqref{e:Rob1f}, and \eqref{e:l1absenp}, we obtain that
\begin{equation}
\label{e:2015-02-17a}
\big(\|x_n-\mathsf{z}\|\big)_{n\in\NN}\:\;\text{is almost 
surely bounded}.
\end{equation}
In turn, by \eqref{e:condi2},
\begin{equation}
\label{e:2015-02-17b}
\big(\EC{\|s_n(\mathsf{z})\|^2}{\XX_n}\big)_{n\in\NN}\:\;
\text{is almost surely bounded}.
\end{equation}
In addition, \eqref{e:condi1} implies that
\begin{equation}
(\forall n\in\NN)\quad
\EC{\|t_n-\mathsf{z}\|^2}{\XX_n}\leq 2(1+\overline{\mu}_{1}
(\mathsf{z}))\big(\EC{\|s_n(\mathsf{z})\|^2}{\XX_n}+
\EC{\|d_n\|^2}{\XX_n}\big)+\nu_{1,n}(\mathsf{z}),
\end{equation}
from which we deduce that 
\begin{equation}
\label{e:2015-02-17c}
\big(\lambda_n\EC{\|t_n-\mathsf{z}\|^2}{\XX_n}\big)_{n\in\NN}\:\;
\text{is almost surely bounded}.
\end{equation}
Next, we observe that \eqref{e:condi1} and \eqref{e:condi2} yield
\begin{align}
\label{e:Qr6u5-O-27c}
(\forall n\in\NN)\quad
&\EC{\|t_n-\mathsf{z}\|^2}{\XX_n}+\theta_{1,n}(\mathsf{z})+
(1+\mu_{1,n}(\mathsf{z}))\theta_{2,n}(\mathsf{z})\nonumber\\
&\leq (1+\mu_{1,n}(\mathsf{z}))(1+\mu_{2,n}(\mathsf{z}))
\|x_n-\mathsf{z}\|^2+\nu_{1,n}(\mathsf{z})\nonumber\\
&\quad\; +(1+\mu_{1,n}(\mathsf{z}))
\big(\nu_{2,n}(\mathsf{z})+
2\EC{\scal{s_n(\mathsf{z})}{d_n}}{\XX_n}+\EC{\|d_n\|^2}{\XX_n}\big).
\end{align}
Now set
\begin{equation}
\label{e:blablaR}
\begin{cases}
\theta_n(\mathsf{z})=\theta_{1,n}(\mathsf{z})+
(1+\mu_{1,n}(\mathsf{z}))\theta_{2,n}(\mathsf{z})\\
\mu_n(\mathsf{z})=\mu_{1,n}(\mathsf{z})+
(1+\overline{\mu}_1(\mathsf{z}))\mu_{2,n}(\mathsf{z})\\
\nu_n(\mathsf{z})=\nu_{1,n}(\mathsf{z})+
(1+\overline{\mu}_1(\mathsf{z}))
\big(\nu_{2,n}(\mathsf{z})+2\sqrt{\EC{\|s_n(\mathsf{z})\|^2}{\XX_n}}
\sqrt{\EC{\|d_n\|^2}{\XX_n}}+\EC{\|d_n\|^2}{\XX_n}\big)\\
\xi_n(\mathsf{z})=2\lambda_n\|t_n-\mathsf{z}\|\,\|c_n\|+
2(1-\lambda_n)\|x_n-\mathsf{z}\|\,\|c_n\|+\lambda_n\|c_n\|^2.
\end{cases}
\end{equation}
By the Cauchy-Schwarz inequality and \eqref{e:Qr6u5-O-27c}, 
\begin{equation}\label{e:ECtzkexz}
(\forall n\in\NN)\quad
\EC{\|t_n-\mathsf{z}\|^2}{\XX_n}+\theta_n(\mathsf{z})\leq
(1+\mu_n(\mathsf{z}))\|x_n-\mathsf{z}\|^2+\nu_n(\mathsf{z}).
\end{equation}
On the other hand, by the conditional Cauchy-Schwarz inequality,
\begin{align}
(\forall n\in\NN)\quad 
\lambda_n\EC{\xi_n(\mathsf{z})}{\XX_n}
&\leq 2(1-\lambda_n)\lambda_n\|x_n-\mathsf{z}\|
\,\EC{\|c_n\|}{\XX_n}\nonumber\\
&\quad\;+2\lambda_n\sqrt{\lambda_n\EC{\|t_n-\mathsf{z}\|^2}{\XX_n}}
\sqrt{\lambda_n\EC{\|c_n\|^2}{\XX_n}}+\lambda_n^2
\EC{\|c_n\|^2}{\XX_n}\nonumber\\
&\leq 2\|x_n-\mathsf{z}\|
\,\lambda_n\sqrt{\EC{\|c_n\|^2}{\XX_n}}\nonumber\\
&\quad\;+2\sqrt{\lambda_n\EC{\|t_n-\mathsf{z}\|^2}{\XX_n}}
\lambda_n\sqrt{\EC{\|c_n\|^2}{\XX_n}}+
\lambda_n^2\EC{\|c_n\|^2}{\XX_n}. 
\end{align}
Thus, it follows from \eqref{e:2015-02-17a}, \ref{a:1ii}, and 
\eqref{e:2015-02-17c} that 
\begin{equation}
\label{e:2015-02-17d}
\sum_{n\in\NN}\lambda_n\EC{\xi_n(\mathsf{z})}{\XX_n}<\pinf.
\end{equation}
Let us define
\begin{equation}
\label{e:2013-11-15a}
(\forall n\in\NN)\quad 
\begin{cases}
\vartheta_n(\mathsf{z})=\lambda_n\theta_n(\mathsf{z})+
\lambda_n(1-\lambda_n)\EC{\|t_n-x_n\|^2}{\XX_n}\\
\chi_n(\mathsf{z})=\lambda_n\mu_n(\mathsf{z})\\
\eta_n(\mathsf{z})=\lambda_n\EC{\xi_n(\mathsf{z})}{\XX_n}+
\lambda_n\nu_n(\mathsf{z}).
\end{cases}
\end{equation}
It follows from \ref{a:1ii}, \ref{a:1iii}, 
\eqref{e:2015-02-17b}, and the inclusion
$\ell_+^{1/2}({\mathscr{X}})\subset\ell_+^{1}({\mathscr{X}})$ that
$(\theta_n(\mathsf{z}))_{n\in\NN}\in\ell_+({\mathscr{X}})$, 
$(\lambda_n\mu_n(\mathsf{z}))_{n\in\NN}\in\ell_+^1({\mathscr{X}})$, 
and
$(\lambda_n\nu_n(\mathsf{z}))_{n\in\NN}\in\ell_+^{1}({\mathscr{X}})$.
Therefore,  
\begin{equation}
\label{e:Huy9i-22a}
\big(\vartheta_n(\mathsf{z})\big)_{n\in\NN}\in\ell_+({\mathscr{X}})
\end{equation}
and
\begin{equation}
\label{e:Huy9i-22b}
\big(\chi_n(\mathsf{z})\big)_{n\in\NN}\in\ell_+^1({\mathscr{X}}).
\end{equation}
Furthermore, we deduce from \eqref{e:2015-02-17d} that
\begin{equation}
\label{e:Huy9i-22c}
\big(\eta_n(\mathsf{z})\big)_{n\in\NN}\in\ell_+^1({\mathscr{X}}).
\end{equation}
Next, we derive from \ref{a:1i}, \cite[Corollary~2.14]{Livre1}, and 
\eqref{e:ECtzkexz} that
\begin{align}
\label{e:2013-11-16y}
(\forall n\in\NN)\quad
\EC{\|x_{n+1}-\mathsf{z}\|^2}{\XX_n}
&=\EC{\|(1-\lambda_n)(x_n -\mathsf{z})+\lambda_n(
t_n-\mathsf{z}+c_n)\|^2}{\XX_n}\nonumber\\
&=(1-\lambda_n)\EC{\|x_n -\mathsf{z}\|^2}{\XX_n}+
\lambda_n\EC{\|t_n-\mathsf{z}+c_n\|^2}{\XX_n}\nonumber\\
&\quad\;-\lambda_n(1-\lambda_n)\EC{\|t_n-x_n+c_n\|^2}{\XX_n}
\nonumber\\
&=(1-\lambda_n)\|x_n-\mathsf{z}\|^2+
\lambda_n\EC{\|t_n-\mathsf{z}\|^2}{\XX_n}\nonumber\\
&\quad\;+2\lambda_n\EC{\scal{t_n-\mathsf{z}}{c_n}}{\XX_n}
-\lambda_n(1-\lambda_n)\EC{\|t_n-x_n\|^2}{\XX_n}\nonumber\\
&\quad\;
-2\lambda_n(1-\lambda_n)\EC{\scal{t_n-x_n}{c_n}}{\XX_n}
+\lambda_n^2\EC{\|c_n\|^2}{\XX_n}
\nonumber\\
&=(1-\lambda_n)\|x_n-\mathsf{z}\|^2+
\lambda_n\EC{\|t_n-\mathsf{z}\|^2}{\XX_n}\nonumber\\
&\quad\;-\lambda_n(1-\lambda_n)\EC{\|t_n-x_n\|^2}{\XX_n}
+2\lambda_n^2\EC{\scal{t_n-\mathsf{z}}{c_n}}{\XX_n}
\nonumber\\
&\quad\;
+2\lambda_n(1-\lambda_n)\EC{\scal{x_n-z}{c_n}}{\XX_n}
+\lambda_n^2\EC{\|c_n\|^2}{\XX_n}
\nonumber\\
&\leq(1-\lambda_n)\|x_n-\mathsf{z}\|^2+
\lambda_n\EC{\|t_n-\mathsf{z}\|^2}{\XX_n}\nonumber\\
&\quad\;
-\lambda_n(1-\lambda_n)\EC{\|t_n-x_n\|^2}{\XX_n}
+\lambda_n\EC{\xi_n(\mathsf{z})}{\XX_n}\nonumber\\
&\leq\big(1+\chi_n(\mathsf{z})\big)\|x_n-\mathsf{z}\|^2-
\vartheta_n(\mathsf{z})+\eta_n(\mathsf{z}).
\end{align}
We therefore recover \eqref{e:sqf1} with $\phi\colon t\mapsto t^2$.
Hence, appealing to \eqref{e:Huy9i-22a}, \eqref{e:Huy9i-22b},
\eqref{e:Huy9i-22c}, and Proposition~\ref{p:1}\ref{p:1i}, we 
obtain 
$(\vartheta_n(\mathsf{z}))_{n\in\NN}\in\ell_+^1({\mathscr{X}})$,
which establishes \ref{t:1i} and \ref{t:1ii}.
Finally, \ref{t:1iii}--\ref{t:1v} follow from 
Proposition~\ref{p:1}\ref{p:1iii}--\ref{p:1v}.
\end{proof}

\begin{remark}\rm\
\begin{enumerate}
\item Theorem~\ref{t:1} extends \cite[Theorem~2.5]{Siop15}, which 
corresponds to the special case when, for every $n\in\NN$ and 
every $\mathsf{z}\in\mathsf{F}$, 
$\mu_{1,n}(\mathsf{z})=\nu_{1,n}(\mathsf{z})=
\theta_{2,n}(\mathsf{z})=0$ and $d_n=0$.
Note that the $L^2$ assumptions in Theorem~\ref{t:1} are just made
to unify the presentation with the forthcoming results of
Section~\ref{sec:4}. However, since we take only conditional
expectations of $\RP$-valued random variables, they are not 
necessary.
\item 
Suppose that $(\forall n\in\NN)$ $c_n=d_n=0$. Then 
\eqref{e:blablaR} and \eqref{e:2013-11-15a} imply that
\begin{equation}
(\forall n\in\NN)\quad 
\eta_n(\mathsf{z})=\lambda_n
\big(\nu_{1,n}(\mathsf{z})+
(1+\overline{\mu}_1(\mathsf{z})) \nu_{2,n}(\mathsf{z})\big),
\end{equation}
and it follows directly from 
\eqref{e:2013-11-16y} and Proposition \ref{p:1} that
the conditions on $(\nu_{1,n}(\mathsf{z}))_{n\in\NN}$
and $(\nu_{1,n}(\mathsf{z}))_{n\in\NN}$ can be weakened to
$(\lambda_n\nu_{1,n}(\mathsf{z}))_{n\in\NN}\in
\ell_+^1({\mathscr{X}})$ and
$(\lambda_n\nu_{2,n}(\mathsf{z}))_{n\in\NN}\in
\ell_+^1({\mathscr{X}})$.
\end{enumerate}
\end{remark}

\section{A stochastic forward-backward algorithm}
\label{sec:4}

We now state our the main result of the paper.
\begin{theorem}
\label{t:2}
Consider the setting of Problem~\ref{prob:1}, let
$(\tau_n)_{n\in\NN}$ be a sequence in $\RP$, 
let $\mathscr{X}=(\XX_n)_{n\in\NN}$ be a sequence of 
sub-sigma-algebras of $\FF$, and let
$(x_n)_{n\in\NN}$ be a sequence generated by 
Algorithm~\ref{algo:1}. Assume that the following are satisfied:
\begin{enumerate}[label=\rm(\alph*)]
\item
\label{a:t2i-}
$(\forall n\!\in\!\NN)$ $\sigma(x_0,\ldots,x_n)\subset
\XX_n\subset\XX_{n+1}$.
\item 
\label{a:t2i}
$\sum_{n\in\NN}\lambda_n\sqrt{\EC{\|a_n\|^2}{\XX_n}}<\pinf$.
\item 
\label{a:t2ii}
$\sum_{n\in\NN}\sqrt{\lambda_n}
\|\EC{u_n}{\XX_n}-\mathsf{B}x_n\|<\pinf$.
\item 
\label{a:t2iii} 
For every $\mathsf{z}\in\mathsf{F}$, there exists 
$\big(\zeta_n(\mathsf{z})\big)_{n\in\NN}\in
\ell^\infty_+({\mathscr{X}})$ such that
$\big(\lambda_n\zeta_n(\mathsf{z})\big)_{n\in\NN}\in
\ell_+^{1/2}({\mathscr{X}})$ and
\begin{equation}
\label{e:boundtauetazeta}
(\forall n\in\NN)\quad 
\EC{\|u_n-\EC{u_n}{\XX_n}\|^2}{\XX_n}\leq\tau_n\|\mathsf{B} x_n
-\mathsf{B}\mathsf{z}\|^2+\zeta_n(\mathsf{z}).
\end{equation}
\item 
\label{a:t2iv} 
$\inf_{n\in\NN}\gamma_n>0$, $\sup_{n\in\NN}\tau_n<\pinf$, and
$\sup_{n\in\NN}(1+\tau_n)\gamma_n<2\vartheta$.
\item 
\label{a:t2v} 
Either $\inf_{n\in\NN}\lambda_n>0$ or
$\big[\,\gamma_n\equiv\gamma$, $\sum_{n\in\NN}\tau_n<\pinf$, and
$\sum_{n\in\NN}\lambda_n=\pinf\,\big]$.
\end{enumerate}
Then the following hold for some $\mathsf{F}$-valued random 
variable $x$:
\begin{enumerate}
\item
\label{t:2i}
$\sum_{n\in\NN}\lambda_n\|\mathsf{B}x_n-\mathsf{B}\mathsf{z}\|^2
<\pinf\;\as$
\item
\label{t:2ii}
$\sum_{n\in\NN}\lambda_n\|x_n-\gamma_n\mathsf{B}x_n-
\mathsf{J}_{\gamma_n\mathsf{A}}(x_n-\gamma_n\mathsf{B}x_n)
+\gamma_n\mathsf{B}\mathsf{z}\|^2<\pinf\;\as$
\item
\label{t:2iii}
$(x_n)_{n\in\NN}$ converges weakly $\as$ to $x$.
\item
\label{t:2iv}
Suppose that one of the following is satisfied:
\begin{enumerate}[label=\rm(\alph*)]
\setcounter{enumii}{6}
\item
\label{a:nyc2014-04-03v}
$\mathsf{A}$ is demiregular at every $\mathsf{z}\in\mathsf{F}$.
\item
\label{a:nyc2014-04-03vi}
$\mathsf{B}$ is demiregular at every $\mathsf{z}\in\mathsf{F}$.
\end{enumerate}
Then $(x_n)_{n\in\NN}$ converges strongly $\as$ to $x$.
\end{enumerate}
\end{theorem}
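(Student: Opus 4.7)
The plan is to apply Theorem~\ref{t:1} to the sequence $(x_n)_{n\in\NN}$ produced by Algorithm~\ref{algo:1}, with the natural identifications $t_n = \mathsf{J}_{\gamma_n\mathsf{A}}(x_n - \gamma_n u_n)$ and $c_n = a_n$. Then \eqref{e:FB} is exactly the recursion required in item~\ref{a:1i} of Theorem~\ref{t:1}, hypothesis~\ref{a:1i-} of Theorem~\ref{t:1} is inherited from our~\ref{a:t2i-}, and the first summability in~\ref{a:1ii} is precisely our~\ref{a:t2i}.

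The construction required by~\ref{a:1iii} is the core of the argument. I would take $s_n(\mathsf{z}) = x_n - \mathsf{z} - \gamma_n(\mathsf{B}x_n - \mathsf{B}\mathsf{z})$ (the $\XX_n$-measurable ideal forward step) and $d_n = -\gamma_n(u_n - \mathsf{B}x_n)$ (the stochastic perturbation of $\mathsf{B}x_n$), so that $s_n(\mathsf{z})+d_n = x_n - \mathsf{z} - \gamma_n(u_n - \mathsf{B}\mathsf{z})$. Since $\mathsf{z} = \mathsf{J}_{\gamma_n\mathsf{A}}(\mathsf{z}-\gamma_n\mathsf{B}\mathsf{z})$, firm nonexpansiveness of $\mathsf{J}_{\gamma_n\mathsf{A}}$ yields the pointwise inequality
\[
\|t_n - \mathsf{z}\|^2 + \|x_n - t_n - \gamma_n(u_n - \mathsf{B}\mathsf{z})\|^2 \leq \|s_n(\mathsf{z})+d_n\|^2,
\]
which under conditioning on $\XX_n$ produces \eqref{e:condi1} with $\theta_{1,n}(\mathsf{z}) = \EC{\|x_n - t_n - \gamma_n(u_n - \mathsf{B}\mathsf{z})\|^2}{\XX_n}$ and vanishing $\mu_{1,n}$, $\nu_{1,n}$. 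On the other hand, cocoercivity of $\mathsf{B}$ gives $\|s_n(\mathsf{z})\|^2 \leq \|x_n-\mathsf{z}\|^2 - \gamma_n(2\vartheta-\gamma_n)\|\mathsf{B}x_n-\mathsf{B}\mathsf{z}\|^2$, which is \eqref{e:condi2} with $\theta_{2,n}(\mathsf{z}) = \gamma_n(2\vartheta-\gamma_n)\|\mathsf{B}x_n-\mathsf{B}\mathsf{z}\|^2$ and vanishing $\mu_{2,n}$, $\nu_{2,n}$. Writing $e_n = \EC{u_n}{\XX_n} - \mathsf{B}x_n$, the variance bound~\eqref{e:boundtauetazeta} combined with the bias/noise decomposition of $u_n - \mathsf{B}x_n$ gives $\EC{\|d_n\|^2}{\XX_n} \leq \gamma_n^2(\|e_n\|^2 + \tau_n\|\mathsf{B}x_n - \mathsf{B}\mathsf{z}\|^2 + \zeta_n(\mathsf{z}))$; the apparent circularity through the term $\tau_n\|\mathsf{B}x_n-\mathsf{B}\mathsf{z}\|^2$ is resolved by reorganizing the bounds so that this variance contribution is subtracted off the cocoercivity gain, tightening $\theta_{2,n}(\mathsf{z})$ to $\gamma_n(2\vartheta-\gamma_n(1+\tau_n))\|\mathsf{B}x_n-\mathsf{B}\mathsf{z}\|^2$, which remains uniformly positive by~\ref{a:t2iv}. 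The remaining bias and residual contributions are placed into the $\mu$'s and $\nu$'s using Young's inequalities together with $\lambda_n\le\sqrt{\lambda_n}$ and the $(1/\vartheta)$-Lipschitz bound $\|\mathsf{B}x_n - \mathsf{B}\mathsf{z}\| \le \|x_n - \mathsf{z}\|/\vartheta$, so that their $\ell_+^1$ and $\ell_+^{1/2}$ summabilities follow from~\ref{a:t2i}--\ref{a:t2iii}.

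Invoking Theorem~\ref{t:1}\ref{t:1i} then produces $\sum_n\lambda_n\theta_{1,n}(\mathsf{z})<\pinf$ and $\sum_n\lambda_n\theta_{2,n}(\mathsf{z})<\pinf$ almost surely; the latter yields \ref{t:2i} after division by the uniform positive lower bound on $\gamma_n(2\vartheta-\gamma_n(1+\tau_n))$, and the former yields \ref{t:2ii} after expanding $\theta_{1,n}(\mathsf{z})$ through Jensen and subtracting the bias contribution (controlled by~\ref{a:t2ii}). For \ref{t:2iii}, I would verify $\WC(x_n)_{n\in\NN}\subset\mathsf{F}\;\as$: fix $\omega$ in the full-measure set provided by Theorem~\ref{t:1} and a weak cluster point $x^\ast$ with $x_{n_k}\weakly x^\ast$; in the branch $\inf_n\lambda_n>0$ of~\ref{a:t2v}, \ref{t:2i} forces $\mathsf{B}x_n \to \mathsf{B}\mathsf{z}$ and, combined with \ref{t:2ii}, implies $(x_{n_k} - t_{n_k})/\gamma_{n_k} + \mathsf{B}x_{n_k} - \mathsf{B}\mathsf{z} \to 0$; the maximal monotonicity of $\mathsf{A}+\mathsf{B}$ and the weak-to-strong closedness of its graph then place $x^\ast$ in $\mathsf{F}$. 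In the alternative branch $\gamma_n\equiv\gamma$, $\sum_n\tau_n<\pinf$, $\sum_n\lambda_n=\pinf$, a subsequential extraction along indices where \ref{t:2ii} forces convergence to zero, exploiting the constancy of $\gamma$ and the fixed-point characterization $x^\ast = \mathsf{J}_{\gamma\mathsf{A}}(x^\ast - \gamma\mathsf{B}x^\ast)$, yields the same conclusion. Theorem~\ref{t:1}\ref{t:1iii} then produces \ref{t:2iii}, and under the demiregularity of $\mathsf{A}$ or $\mathsf{B}$ at points of $\mathsf{F}$, \ref{t:2i}--\ref{t:2ii} combined with Theorem~\ref{t:1}\ref{t:1v} upgrade this to strong almost-sure convergence, giving \ref{t:2iv}.

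The main obstacle is the delicate bookkeeping of error terms arising from the bias $e_n$: assumption~\ref{a:t2ii} only furnishes the marginal summability $\sum_n\sqrt{\lambda_n}\|e_n\|<\pinf$, which is the weakest rate compatible with the $\ell_+^{1/2}$ class appearing in Theorem~\ref{t:1}, and matching it requires careful Young inequalities together with the cocoercivity-induced Lipschitz bound on $\mathsf{B}$ at several places to simultaneously control the cross term $\gamma_n\|x_n-\mathsf{z}\|\|e_n\|$ and the quadratic residual $\gamma_n^2\|e_n\|^2$. A secondary subtlety is the case analysis in~\ref{a:t2v}: in the vanishing-relaxation regime, pointwise decay of $\|\mathsf{B}x_n - \mathsf{B}\mathsf{z}\|$ cannot be read off \ref{t:2i} directly, so the inclusion $x^\ast\in\mathsf{F}$ in that branch must be obtained from \ref{t:2ii} by a subsequential extraction exploiting the constancy of $\gamma$ and the finiteness of $\sum_n\tau_n$.
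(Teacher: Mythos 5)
Your overall architecture (apply Theorem~\ref{t:1} with $t_n=\mathsf{J}_{\gamma_n\mathsf{A}}(x_n-\gamma_nu_n)$ and $c_n=a_n$, firm nonexpansiveness for \eqref{e:condi1}, cocoercivity for \eqref{e:condi2}) is the paper's, but your choice of the pair $(s_n(\mathsf{z}),d_n)$ contains a genuine gap. You put the \emph{entire} approximation error into $d_n=-\gamma_n(u_n-\mathsf{B}x_n)$, so that $s_n(\mathsf{z})=x_n-\mathsf{z}-\gamma_n(\mathsf{B}x_n-\mathsf{B}\mathsf{z})$ is $\XX_n$-measurable. Then hypothesis \ref{a:1ii} of Theorem~\ref{t:1} demands $\sum_n\sqrt{\lambda_n\EC{\|d_n\|^2}{\XX_n}}<\pinf$, and by \eqref{e:boundtauetazeta} this requires in particular $\sum_n\sqrt{\lambda_n\tau_n}\,\|\mathsf{B}x_n-\mathsf{B}\mathsf{z}\|<\pinf$, which is not available \emph{a priori}: when $\inf_n\lambda_n>0$ and $\tau_n\equiv\tau>0$ this is $\sum_n\sqrt{\lambda_n}\|\mathsf{B}x_n-\mathsf{B}\mathsf{z}\|$, which does not follow even from the conclusion \ref{t:2i} you are trying to prove (take $\|\mathsf{B}x_n-\mathsf{B}\mathsf{z}\|\sim n^{-0.6}$). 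The ``reorganization'' you invoke --- subtracting the variance off the cocoercivity gain to tighten $\theta_{2,n}(\mathsf{z})$ to $\gamma_n(2\vartheta-(1+\tau_n)\gamma_n)\|\mathsf{B}x_n-\mathsf{B}\mathsf{z}\|^2$ --- is simply not possible with your $s_n(\mathsf{z})$: since it is $\XX_n$-measurable, $\EC{\|s_n(\mathsf{z})\|^2}{\XX_n}=\|s_n(\mathsf{z})\|^2$ contains no variance term to subtract. The correct move (and the paper's key device) is to split the error into bias and centered noise, setting $\widetilde{u}_n=u_n-\EC{u_n}{\XX_n}+\mathsf{B}x_n$, $s_n(\mathsf{z})=x_n-\gamma_n\widetilde{u}_n-(\Id-\gamma_n\mathsf{B})\mathsf{z}$ and $d_n=-\gamma_n(\EC{u_n}{\XX_n}-\mathsf{B}x_n)$; then $d_n$ carries only the bias (controlled by \ref{a:t2ii}), the cross term in $\EC{\|s_n(\mathsf{z})\|^2}{\XX_n}$ vanishes by conditional centering, and the variance lands exactly where it can be absorbed by the cocoercivity gain under \ref{a:t2iv}.

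A second gap is the branch $\gamma_n\equiv\gamma$, $\sum_n\tau_n<\pinf$, $\sum_n\lambda_n=\pinf$ of \ref{a:t2v}. There, \ref{t:2i}--\ref{t:2ii} only give $\varliminf\|\mathsf{T}x_n-x_n\|=0$ with $\mathsf{T}=\mathsf{J}_{\gamma\mathsf{A}}\circ(\Id-\gamma\mathsf{B})$, and a ``subsequential extraction'' cannot establish $\WC(x_n)_{n\in\NN}\subset\mathsf{F}$: an arbitrary weak cluster point is attained along a subsequence on which $\|\mathsf{T}x_n-x_n\|$ need not tend to $0$. The missing ingredient is a quasi-Fej\'er argument on the scalar sequence $(\|\mathsf{T}x_n-x_n\|)_{n\in\NN}$ itself --- using nonexpansiveness of $\mathsf{T}$ and the recursion to get $\EC{\|\mathsf{T}x_{n+1}-x_{n+1}\|}{\XX_n}\leq\|\mathsf{T}x_n-x_n\|+2\xi_n$ with $(\xi_n)_{n\in\NN}$ summable --- which upgrades the liminf to $\|\mathsf{T}x_n-x_n\|\to0$ along the whole sequence, after which demiclosedness applies. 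Relatedly, your strong-convergence claim under demiregularity of $\mathsf{B}$ needs $\mathsf{B}x_n\to\mathsf{B}\mathsf{z}$ $\as$, which in this branch does not follow from \ref{t:2i} (since $\lambda_n$ may vanish) and requires the additional firm-nonexpansiveness estimate bounding $\|\mathsf{B}x_n-\mathsf{B}\mathsf{z}\|^2$ by a multiple of $\|x_n-\mathsf{z}\|\,\|\mathsf{T}x_n-x_n\|$.
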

\begin{proof}
Set
\begin{equation}
\label{e:es175}
(\forall n\in\NN)\quad\mathsf{R}_n=\Id -\gamma_n \mathsf{B},\;
r_n=x_n-\gamma_n u_n,\;\text{and}\;
t_n=\mathsf{J}_{\gamma_n \mathsf{A}}r_n.
\end{equation}
Then it follows from \eqref{e:FB} that assumption~\ref{a:1i} in 
Theorem~\ref{t:1} is satisfied with 
\begin{equation}
\label{e:defcnstochgrad}
(\forall n\in\NN)\quad c_n=a_n.
\end{equation}
In addition, for every $n\in\NN$, 
$\mathsf{F}=\Fix(\mathsf{J}_{\gamma_n \mathsf{A}}\mathsf{R}_n)$
\cite[Proposition~25.1(iv)]{Livre1} and we deduce from the
firm nonexpansiveness of the operators 
$(\mathsf{J}_{\gamma_n\mathsf{A}})_{n\in\NN}$  
\cite[Corollary~23.8]{Livre1} that
\begin{align}
\label{e:Qr6u5-O-01}
(\forall\mathsf{z}\in\mathsf{F})(\forall n\in\NN)\quad 
\|t_n-\mathsf{z}\|^2+\|r_n-\mathsf{J}_{\gamma_n
\mathsf{A}}r_n-\mathsf{R}_n\mathsf{z}+\mathsf{z}\|^2 
\leq\|r_n-\mathsf{R}_n\mathsf{z}\|^2.
\end{align}
Now set
\begin{equation}
\label{e:deftun}
(\forall n\in\NN)\quad
\widetilde{u}_n=u_n-\EC{u_n}{\XX_n}+\mathsf{B}x_n.
\end{equation}
Then we derive from \eqref{e:Qr6u5-O-01} that 
\eqref{e:condi1} holds with
\begin{equation}
\label{e:fz}
(\forall\mathsf{z}\in\mathsf{F})(\forall n\in\NN)\quad 
\begin{cases}
\theta_{1,n}(\mathsf{z})=
\EC{\|r_n-\mathsf{J}_{\gamma_n \mathsf{A}}r_n- 
\mathsf{R}_n \mathsf{z}+\mathsf{z}\|^2}{\XX_n}\\
\mu_{1,n}(\mathsf{z})=\nu_{1,n}(\mathsf{z})=0\\
s_n(\mathsf{z})=x_n-\gamma_n\widetilde{u}_n-\mathsf{R}_n\mathsf{z}\\
d_n=-\gamma_n (\EC{u_n}{\XX_n}-\mathsf{B}x_n).
\end{cases}
\end{equation}
Thus, \eqref{e:defcnstochgrad}, \eqref{e:fz},
\ref{a:t2i}, \ref{a:t2ii}, and \ref{a:t2iv},  
imply that assumption \ref{a:1ii} in Theorem~\ref{t:1} is satisfied
since 
\begin{align}
\sum_{n\in\NN}\sqrt{\lambda_n\EC{\|d_n\|^2}{\XX_n}}
&\leq 2(\tau_n+1)^{-1}\vartheta\sum_{n\in\NN}
\sqrt{\lambda_n\|\EC{u_n}{\XX_n}-\mathsf{B}x_n\|^2}\nonumber\\
&\leq 2\vartheta
\sum_{n\in\NN}\sqrt{\lambda_n}\|\EC{u_n}{\XX_n}-\mathsf{B}x_n\|
\nonumber\\
&<\pinf.
\end{align}
Moreover, for every $\mathsf{z}\in\mathsf{F}$ and $n\in\NN$, we
derive from \eqref{e:deftun}, 
\eqref{e:2015-02-12a}, and \eqref{e:boundtauetazeta} that
\begin{align}
\label{e:thegrandwazoo}
\EC{\|s_n(\mathsf{z})\|^2}{\XX_n} 
&=\EC{\|x_n-\mathsf{z}-\gamma_n(\widetilde{u}_n-\mathsf{B}\mathsf{z})
\|^2}{\XX_n}\nonumber\\
&=\|x_n-\mathsf{z}\|^2-2\gamma_n
\scal{x_n-\mathsf{z}}{\EC{\widetilde{u}_n}{\XX_n}
-\mathsf{B}\mathsf{z}}+\gamma_n^2
\EC{\|\widetilde{u}_n-\mathsf{B}\mathsf{z}\|^2}{\XX_n}\nonumber\\
&=\|x_n-\mathsf{z}\|^2-2\gamma_n\scal{x_n-\mathsf{z}}{\mathsf{B}
x_n-\mathsf{B}\mathsf{z}}+
\gamma_n^2\big(\EC{\|u_n-\EC{u_n}{\XX_n}\|^2}{\XX_n}\nonumber\\
&\quad\;+2\scal{u_n-\EC{u_n}{\XX_n}}
{\mathsf{B}x_n-\mathsf{B}\mathsf{z}}+\|\mathsf{B}
x_n-\mathsf{B}\mathsf{z}\|^2\big)\nonumber\\
&=\|x_n-\mathsf{z}\|^2-2\gamma_n\scal{x_n-\mathsf{z}}{\mathsf{B}
x_n-\mathsf{B}\mathsf{z}}\nonumber\\
&\quad\;+\gamma_n^2\big(\EC{\|u_n-\EC{u_n}{\XX_n}\|^2}{\XX_n}
+\|\mathsf{B}x_n-\mathsf{B}\mathsf{z}\|^2\big)\nonumber\\
&\leq\|x_n-\mathsf{z}\|^2-\gamma_n(2\vartheta-\gamma_n)
\|\mathsf{B}x_n-\mathsf{B}\mathsf{z}\|^2+\gamma_n^2
\EC{\|u_n-\EC{u_n}{\XX_n}\|^2}{\XX_n}
\nonumber\\
&\leq\|x_n-\mathsf{z}\|^2-
\gamma_n\big(2\vartheta-(1+\tau_n)\gamma_n\big)
\|\mathsf{B}x_n-\mathsf{B}\mathsf{z}\|^2+
\gamma_n^2\zeta_n(\mathsf{z}).
\end{align}
Thus, \eqref{e:condi2} is obtained by setting
\begin{equation}
\label{e:deftheta2stoch}
(\forall n\in\NN)\quad
\begin{cases}
\theta_{2,n}(\mathsf{z})=\gamma_n(2\vartheta-(1+\tau_n)\gamma_n)
\|\mathsf{B} x_n-\mathsf{B}\mathsf{z}\|^2\\
\mu_{2,n}(\mathsf{z})=0\\
\nu_{2,n}(\mathsf{z})=\gamma_n^2 \zeta_n(\mathsf{z}).
\end{cases}
\end{equation}
Altogether, it follows from \ref{a:t2iii} and 
\ref{a:t2iv} that assumption \ref{a:1iii} in 
Theorem~\ref{t:1} is also satisfied. By applying 
Theorem~\ref{t:1}\ref{t:1i}, we deduce from \ref{a:t2iv}, 
\eqref{e:fz}, and \eqref{e:deftheta2stoch} that
\begin{equation}
\label{e:jamar1}
(\forall\mathsf{z}\in\mathsf{F})\quad\sum_{n\in\NN}\lambda_n
\|\mathsf{B}x_n-\mathsf{B}\mathsf{z}\|^2<\pinf
\end{equation}
and 
\begin{equation}
\label{e:2015-01-19c}
(\forall\mathsf{z}\in\mathsf{F})\quad 
\sum_{n\in\NN}\lambda_n\EC{\|r_n-\mathsf{J}_{\gamma_n\mathsf{A}}
r_n-\mathsf{R}_n\mathsf{z}+\mathsf{z}\|^2}{\XX_n}<\pinf.
\end{equation}

\ref{t:2i}: See \eqref{e:jamar1}.

\ref{t:2ii}: Let $\mathsf{z}\in\mathsf{F}$. 
It follows from \eqref{e:es175}, \eqref{e:deftun}, 
\eqref{e:2015-01-22}, and the nonexpansiveness of the operators 
$(\mathsf{J}_{\gamma_n\mathsf{A}})_{n\in\NN}$ that
\begin{align}
(\forall n\in\NN)\quad&
\|x_n-\gamma_n\mathsf{B} x_n-\mathsf{J}_{\gamma_n
\mathsf{A}}(x_n-\gamma_n \mathsf{B} x_n)+\gamma_n \mathsf{B}
\mathsf{z}\|^2\nonumber\\
&\qquad=\|\EC{x_n-\gamma_n \widetilde{u}_n}{\XX_n}-
\mathsf{J}_{\gamma_n\mathsf{A}}(x_n-\gamma_n\mathsf{B}x_n)
+\gamma_n\mathsf{B}\mathsf{z}\|^2\nonumber\\
&\qquad\leq 3\big(\|\EC{r_n-\mathsf{J}_{\gamma_n
\mathsf{A}}r_n+\gamma_n \mathsf{B} \mathsf{z}}{\XX_n}\|^2
+\gamma_n^2\|\EC{u_n}{\XX_n}-\mathsf{B}x_n\|^2\nonumber\\
&\qquad\quad\;+\|\EC{\mathsf{J}_{\gamma_n \mathsf{A}}r_n}{\XX_n}
-\mathsf{J}_{\gamma_n\mathsf{A}}
(x_n-\gamma_n\mathsf{B}x_n)\|^2\big)\nonumber\\
&\qquad\leq 3\big(\EC{\|r_n-\mathsf{J}_{\gamma_n
\mathsf{A}}r_n+\gamma_n\mathsf{B} \mathsf{z}\|^2}{\XX_n}
+\gamma_n^2\EC{\|u_n-\mathsf{B}x_n\|^2}{\XX_n}
\nonumber\\
&\qquad\quad\;+\EC{\|\mathsf{J}_{\gamma_n\mathsf{A}}r_n
-\mathsf{J}_{\gamma_n\mathsf{A}}(x_n-\gamma_n\mathsf{B}x_n)\|^2}
{\XX_n}\big)\nonumber\\
&\qquad\leq 3\big(\EC{\|r_n-\mathsf{J}_{\gamma_n
\mathsf{A}}r_n+\gamma_n \mathsf{B} \mathsf{z}\|^2}{\XX_n}
+\gamma_n^2\EC{\|u_n-\mathsf{B}x_n\|^2}{\XX_n}\nonumber\\
&\qquad\quad\;+\EC{\|r_n-(x_n-\gamma_n\mathsf{B}x_n)\|^2}
{\XX_n}\big)\nonumber\\
&\qquad=3\big(\EC{\|r_n-\mathsf{J}_{\gamma_n\mathsf{A}}
r_n-\mathsf{R}_n\mathsf{z}+\mathsf{z}\|^2}{\XX_n}
+2\gamma_n^2\EC{\|u_n-\mathsf{B}x_n\|^2}{\XX_n}\big)
\nonumber\\
&\qquad\leq3\big(\EC{\|r_n-\mathsf{J}_{\gamma_n\mathsf{A}}
r_n-\mathsf{R}_n\mathsf{z}+\mathsf{z}\|^2}{\XX_n}
+8\vartheta^2\EC{\|u_n-\mathsf{B}x_n\|^2}{\XX_n}\big).
\label{e:astucesstoch}
\end{align}
However, by \eqref{e:boundtauetazeta},
\begin{align}
\label{e:2015-01-19a}
(\forall n\in\NN)\quad
\EC{\|u_n-\mathsf{B}x_n\|^2}{\XX_n}
&\leq2\EC{\|u_n-\EC{u_n}{\XX_n}\|^2+
\|\EC{u_n}{\XX_n}-\mathsf{B}x_n\|^2}{\XX_n}\nonumber\\
&\leq2\big(\tau_n\|\mathsf{B}x_n-\mathsf{B}\mathsf{z}\|^2+
\zeta_n+\|\EC{u_n}{\XX_n}-\mathsf{B}x_n\|^2\big).
\end{align}
Since $\sup_{n\in\NN}\tau_n<\pinf$ by \ref{a:t2iv}, 
we therefore derive from 
\ref{t:2i}, \ref{a:t2ii}, and \ref{a:t2iii} that 
\begin{equation}
\label{e:2015-01-19b}
\sum_{n\in\NN}\lambda_n\EC{\|u_n-\mathsf{B}x_n\|^2}{\XX_n}<\pinf.
\end{equation}
Altogether, the claim follows from \eqref{e:2015-01-19c}, 
\eqref{e:astucesstoch}, and \eqref{e:2015-01-19b}.

\ref{t:2iii}--\ref{t:2iv}: 
Let $\mathsf{z}\in\mathsf{F}$.
We consider the two cases separately.
\begin{itemize}
\item
Suppose that $\inf_{n\in\NN}\lambda_n>0$. We derive from 
\ref{t:2i}, \ref{t:2ii}, and
\ref{a:t2iv} that there exists 
$\widetilde{\Omega}\in\FF$ such that $\PP(\widetilde{\Omega})=1$, 
\begin{equation}
\label{e:focussimA}
(\forall\omega\in\widetilde{\Omega})\quad x_n(\omega)-
\mathsf{J}_{\gamma_n\mathsf{A}}\big(x_n(\omega)-\gamma_n
\mathsf{B} x_n(\omega)\big)\to 0,
\end{equation}
and 
\begin{equation}
\label{e:focussimB}
(\forall\omega\in\widetilde{\Omega})\quad 
\mathsf{B} x_n(\omega)\to\mathsf{B}\mathsf{z}.
\end{equation}
Now set
\begin{equation}
\label{e:burn}
(\forall n\in\NN)\quad y_n
=\mathsf{J}_{\gamma_n\mathsf{A}}(x_n-\gamma_n \mathsf{B}x_n)
\quad\text{and}\quad v_n=\gamma_n^{-1}(x_n-y_n)-\mathsf{B}x_n.
\end{equation}
It follows from \ref{a:t2iv}, \eqref{e:focussimA}, and
\eqref{e:focussimB} that
\begin{equation}
\label{e:2014-12-31d}
(\forall\omega\in\widetilde{\Omega})
\quad
y_n(\omega)-x_n(\omega)\to 0\quad\text{and}\quad v_n(\omega)\to
-\mathsf{B}\mathsf{z}.
\end{equation}
Let $\omega\in\widetilde{\Omega}$. Assume that there exist 
$\mathsf{x}\in\HH$ and a strictly increasing sequence 
$(k_n)_{n\in\NN}$ in $\NN$ such that
$x_{k_n}(\omega)\weakly\mathsf{x}$.
Since $\mathsf{B}x_{k_n}(\omega)\to\mathsf{B}\mathsf{z}$ by
\eqref{e:focussimB} and since $\mathsf{B}$ is maximally monotone 
\cite[Example~20.28]{Livre1}, 
\cite[Proposition~20.33(ii)]{Livre1} yields
$\mathsf{B}\mathsf{x}=\mathsf{B}\mathsf{z}$. 
In addition, \eqref{e:2014-12-31d} implies that
$y_{k_n}(\omega)\weakly\mathsf{x}$ and
$v_{k_n}(\omega)\to-\mathsf{B}\mathsf{z}=-\mathsf{B}\mathsf{x}$. 
Since \eqref{e:burn} entails that 
$(y_{k_n}(\omega),v_{k_n}(\omega))_{n\in\NN}$ 
lies in the graph of $\mathsf{A}$, 
\cite[Proposition~20.33(ii)]{Livre1} asserts that
$-\mathsf{B}\mathsf{x}\in\mathsf{A}\mathsf{x}$, i.e., 
$\mathsf{x}\in\mathsf{F}$. It therefore follows from 
Theorem~\ref{t:1}\ref{t:1iii} that
\begin{equation}
\label{e:2014-12-31a}
x_n(\omega)\weakly x(\omega)
\end{equation}
for every $\omega$ in some $\widehat{\Omega}\in\FF$ such that
$\widehat{\Omega}\subset\widetilde{\Omega}$ and 
$\PP(\widehat{\Omega})=1$. We now turn to the strong convergence
claims. To this end, take $\omega\in\widehat\Omega$.
First, suppose that \ref{a:nyc2014-04-03v} holds. Then
$\mathsf{A}$ is demiregular at 
$x(\omega)$. In view of \eqref{e:2014-12-31d}
and \eqref{e:2014-12-31a}, $y_n(\omega)\weakly x(\omega)$. 
Furthermore, $v_n(\omega)\to-\mathsf{B} x(\omega)$ and 
$(y_n(\omega),v_{n}(\omega))_{n\in\NN}$ 
lies in the graph of $\mathsf{A}$. Altogether
$y_n(\omega)\to x(\omega)$ and
therefore $x_n(\omega)\to x(\omega)$.
Next, suppose that \ref{a:nyc2014-04-03vi} holds. Then,
since \eqref{e:focussimB} yields 
$\mathsf{B}x_n(\omega)\to
\mathsf{B} x(\omega)$, \eqref{e:2014-12-31a} implies that 
$x_n(\omega)\to x(\omega)$.
\item
Suppose that $\sum_{n\in\NN}\tau_n<\pinf$, 
$\sum_{n\in\NN}\lambda_n=\pinf$, 
and $(\forall n\in\NN)$ $\gamma_n=\gamma$.
Let $\mathsf{T}=\mathsf{J}_{\gamma
\mathsf{A}}\circ (\Id -\gamma \mathsf{B})$. 
We deduce from \ref{t:2i} that
\begin{equation}
(\forall\mathsf{z}\in\mathsf{F})\quad
\varliminf\|\mathsf{B}x_n-\mathsf{B}\mathsf{z}\| =0
\end{equation}
and from \ref{t:2ii} that
\begin{equation}
(\forall\mathsf{z}\in\mathsf{F})\quad
\varliminf \|x_n-\mathsf{T} x_n-\gamma (\mathsf{B}
x_n-\mathsf{B}\mathsf{z})\|=0.
\end{equation}
In view of \ref{a:t2iv}, we obtain
\begin{equation}
\label{e:liminfgamconst}
\varliminf \|\mathsf{T} x_n-x_n\|=0.
\end{equation}
In addition, since \ref{a:t2iv} and 
\cite[Proposition~4.33]{Livre1} imply that $\mathsf{T}$ is 
nonexpansive, we derive from \eqref{e:FB} that
\begin{align}
&\hskip -7mm
(\forall n\in\NN)\quad
\|\mathsf{T}x_{n+1}-x_{n+1}\|\nonumber\\
&=\|\mathsf{T}x_{n+1}-(1-\lambda_n) x_n
-\lambda_n (\mathsf{J}_{\gamma
\mathsf{A}}(x_{n}-\gamma u_n)+a_n)\|\nonumber\\
&=\|\mathsf{T} x_{n+1} \!-\!\mathsf{T} x_{n} 
\!-\!(1\!-\!\lambda_n) (x_n\!-\!\mathsf{T}x_{n}) 
-\lambda_n (\mathsf{J}_{\gamma
\mathsf{A}}(x_{n}\!-\!\gamma u_n)
\!-\!\mathsf{J}_{\gamma \mathsf{A}}(x_{n}\!-\!\gamma 
\mathsf{B} x_{n}))\!-\!\lambda_n a_n\|\nonumber\\
&\leq \|\mathsf{T}x_{n+1} -\mathsf{T} x_{n}\|
+(1-\lambda_n)\|\mathsf{T}x_n- x_{n}\|\nonumber\\
&\quad\;+\lambda_n \|\mathsf{J}_{\gamma
\mathsf{A}}(x_{n}-\gamma u_n)
-\mathsf{J}_{\gamma \mathsf{A}}(x_{n}-\gamma \mathsf{B} x_{n})\|
+\lambda_n \|a_n\|\nonumber\\
&\leq\|x_{n+1}-x_n\|
+(1-\lambda_n)\|\mathsf{T}x_n-x_n\|+\lambda_n\gamma
\|u_n-\mathsf{B} x_{n}\|+\lambda_n\|a_n\|\nonumber\\
&=\lambda_n\|\mathsf{J}_{\gamma\mathsf{A}}(x_n-\gamma u_n)+a_n
-x_n\|+(1-\lambda_n)\|\mathsf{T}x_n-x_n\| 
+\lambda_n\gamma \|u_n-\mathsf{B}x_n\|+\lambda_n\|a_n\|\nonumber\\
&\leq\|\mathsf{T}x_n-x_{n}\|+\lambda_n \|\mathsf{J}_{\gamma
\mathsf{A}}(x_{n}-\gamma u_n)-\mathsf{J}_{\gamma
\mathsf{A}}(x_{n}-\gamma \mathsf{B} x_{n})\|
+\lambda_n\gamma \|u_n-\mathsf{B} x_{n}\|
+2\lambda_n\|a_n\|\nonumber\\
&\leq\|\mathsf{T}x_n-x_{n}\|+2\lambda_n
\big(\gamma\|u_n-\mathsf{B}x_{n}\|+\|a_n\|\big).
\label{e:2015-01-19d}
\end{align}
Now set
\begin{equation}
\label{e:2015-01-19e}
(\forall n\in\NN)\quad
\xi_n=\gamma \sqrt{\lambda_n \EC{\|u_n-\mathsf{B}
x_{n}\|^2}{\XX_n}}
+\lambda_n \sqrt{\EC{\|a_n\|^2}{\XX_n}}.
\end{equation}
Using \eqref{e:boundtauetazeta}, we get
\begin{align}
\xi_n
&\leq\gamma \sqrt{\lambda_n
\EC{\|u_n-\EC{u_n}{\XX_n}\|^2}{\XX_n}}
+\gamma\sqrt{\lambda_n\|\EC{u_n}{\XX_n}-\mathsf{B}x_{n}\|^2}
+\lambda_n \sqrt{\EC{\|a_n\|^2}{\XX_n}}\nonumber\\
&\leq\gamma\sqrt{\lambda_n\tau_n} 
\|\mathsf{B}x_{n}-\mathsf{B}\mathsf{z}\|
+\gamma\sqrt{\lambda_n\zeta_n(\mathsf{z})}
+\gamma\sqrt{\lambda_n}\|\EC{u_n}{\XX_n}-\mathsf{B}x_{n}\|
\nonumber\\
&\quad\;+\lambda_n\sqrt{\EC{\|a_n\|^2}{\XX_n}}.
\end{align}
Thus, \eqref{e:2015-01-19d} and \eqref{e:2015-01-21} yield
\begin{align}
\label{e:ineqconstgam}
(\forall n\in\NN)\quad
&\EC{\|\mathsf{T}x_{n+1}-x_{n+1}\|}{\XX_n}\nonumber\\
&\leq\|\mathsf{T}x_{n}-x_n\|
+2\lambda_n\big(\gamma\EC{\|u_n-\mathsf{B}x_n\|}{\XX_n}+
\EC{\|a_n\|}{\XX_n}\big)\nonumber\\
&\leq\|\mathsf{T}x_n-x_n\|+2\xi_n.
\end{align}
In addition, according to the Cauchy-Schwarz inequality and 
\ref{t:2i},
\begin{equation}
\sum_{n\in \NN} \sqrt{\lambda_n \tau_n}
\|\mathsf{B}x_{n}-\mathsf{B}\mathsf{z}\| \leq
\sqrt{\sum_{n\in \NN} \tau_n}
\sqrt{\sum_{n\in \NN} \lambda_n 
\|\mathsf{B}x_{n}-\mathsf{B}\mathsf{z}\|^2}<\pinf.
\end{equation}
Thus, it follows from assumptions 
\ref{a:t2i}-\ref{a:t2iii}
that $(\xi_n)_{n\in\NN}\in\ell_+^1({\mathscr{X}})$, and we deduce
from Proposition~\ref{p:1}\ref{p:1iibis}
and \eqref{e:ineqconstgam}
that $(\|\mathsf{T}x_n- x_n\|)_{n\in\NN}$ converges almost surely.
We then derive from \eqref{e:liminfgamconst} that
there exists $\widetilde{\Omega}\in\FF$ such that
$\PP(\widetilde{\Omega})=1$ and \eqref{e:focussimA} holds.
Let $\omega\in\widetilde{\Omega}$. Suppose that there exist 
$\mathsf{x}\in\HH$ and a strictly increasing sequence 
$(k_n)_{n\in\NN}$ in $\NN$ such that
$x_{k_n}(\omega)\weakly\mathsf{x}$. Since
$x_{k_n}(\omega)\weakly\mathsf{x}$ and 
$\mathsf{T}x_{k_n}(\omega)-x_{k_n}(\omega)\to 0$, 
the demiclosedness principle \cite[Corollary~4.18]{Livre1} asserts 
that $\mathsf{x}\in\mathsf{F}$. Hence, the weak convergence claim
follows from Theorem~\ref{t:1}\ref{t:1iii}. To establish the strong
convergence claims, set $\mathsf{w} =
\mathsf{z}-\gamma \mathsf{B}\mathsf{z}$, and set
$(\forall n\in\NN)$ $w_n=x_n-\gamma\mathsf{B}x_n$. Then 
$\mathsf{T}x_n=\mathsf{J}_{\gamma \mathsf{A}}w_n$ and 
$\mathsf{z}= \mathsf{T}\mathsf{z}=
\mathsf{J}_{\gamma\mathsf{A}}\mathsf{w}$. Hence, appealing to 
the firm nonexpansiveness of $\mathsf{J}_{\gamma\mathsf{A}}$,
we obtain
\begin{align}
\label{e:ehuywt2014-09-07a}
(\forall n\in\NN)\quad
&\scal{\mathsf{T} x_n-\mathsf{z}}{x_n-\mathsf{T}
x_n-\gamma (\mathsf{B} x_n-\mathsf{B}\mathsf{z})}\nonumber\\
&=\scal{\mathsf{T} x_n-\mathsf{z}}{w_n-\mathsf{T}
x_n+\mathsf{z}-\mathsf{w}}\nonumber\\
&=\scal{\mathsf{J}_{\gamma \mathsf{A}}w_n-\mathsf{J}_{\gamma
\mathsf{A}}\mathsf{w}} {(\Id-\mathsf{J}_{\gamma
\mathsf{A}})w_n-(\Id-\mathsf{J}_{\gamma \mathsf{A}})\mathsf{w}}
\nonumber\\
&\geq 0
\end{align}
and therefore
\begin{equation}
\label{e:ehuywt2014-09-07x}
(\forall n\in\NN)\quad
\scal{\mathsf{T}x_n-\mathsf{z}}
{x_n-\mathsf{T}x_n}\geq
\gamma\scal{\mathsf{T}x_n-\mathsf{z}}
{\mathsf{B}x_n-\mathsf{B}\mathsf{z}}.
\end{equation}
Consequently, since $\mathsf{T}$ is nonexpansive and $\mathsf{B}$
satisfies \eqref{e:2015-02-12a},
\begin{align}
\label{e:ehuywt2014-09-07b}
(\forall n\in\NN)\quad
\|x_n-\mathsf{z}\|\,\|\mathsf{T} x_n-x_n\|
&\geq\|\mathsf{T}x_n-\mathsf{z}\|\,\|\mathsf{T} x_n- x_n\|\nonumber\\
&\geq\scal{\mathsf{T}x_n-\mathsf{z}}{x_n-\mathsf{T} x_n}\nonumber\\
&\geq\gamma\scal{\mathsf{T}x_n-\mathsf{z}}
{\mathsf{B}x_n-\mathsf{B}\mathsf{z}}\nonumber\\
&=\gamma\big(\!\scal{\mathsf{T}x_n-x_n}
{\mathsf{B}x_n-\mathsf{B}\mathsf{z}}+
\scal{x_n-\mathsf{z}}{\mathsf{B}x_n-\mathsf{B}\mathsf{z}}\!\big)
\nonumber\\
&\geq-\gamma\|\mathsf{T}x_n-x_n\|\,
\|\mathsf{B}x_n-\mathsf{B}\mathsf{z}\|+
\gamma\vartheta\|\mathsf{B}x_n-\mathsf{B}\mathsf{z}\|^2
\nonumber\\
&\geq-\frac{\gamma}{\vartheta}\|\mathsf{T}x_n-x_n\|\,
\|x_n-\mathsf{z}\|+\gamma\vartheta
\|\mathsf{B}x_n-\mathsf{B}\mathsf{z}\|^2
\end{align}
and hence 
\begin{equation}
(\forall n\in\NN)\quad
\|\mathsf{B}x_n-\mathsf{B}\mathsf{z}\|^2 \leq \frac{1}{\gamma
\vartheta} \Big(1+\frac{\gamma}{\vartheta}\Big)
\|x_n-\mathsf{z}\|\,\|\mathsf{T} x_n-x_n\|.
\end{equation}
Since, $\as$, $(x_n)_{n\in\NN}$ is bounded and 
$\mathsf{T} x_n-x_n\to 0$,
we infer that $\mathsf{B}x_n\to\mathsf{B}\mathsf{z}\;\as$
Thus there exists $\widehat{\Omega}\in\FF$ such that
$\widehat{\Omega}\subset\widetilde{\Omega}$,
$\PP(\widehat{\Omega})=1$, and
\begin{equation}
\label{e:2015-june-17}
(\forall \omega\in\widehat{\Omega})\quad
x_n(\omega)\weakly x(\omega)\quad\text{and}\quad
\mathsf{B} x_n(\omega)\to \mathsf{B}x(\omega).
\end{equation}
Thus, \ref{a:nyc2014-04-03vi} $\Rightarrow$ 
$x_n(\omega)\to x(\omega)$.
Finally, if \ref{a:nyc2014-04-03v} holds, the strong convergence 
of $(x_n(\omega))_{n\in \NN}$ follows from the same arguments
as in the previous case.
\end{itemize}
\end{proof}

\begin{remark} 
\label{r:sunday19}
The demiregularity property in Theorem~\ref{t:2}\ref{t:2iv}
is satisfied by a wide class of operators, e.g., uniformly 
monotone operators or subdifferentials of proper lower 
semicontinuous uniformly convex functions; further examples are 
provided in \cite[Proposition~2.4]{Sico10}.
\end{remark}

\begin{remark} 
\label{r:2015-01-23}
To place our analysis in perspective, we comment on results of 
the literature that seem the most pertinently related 
to Theorem~\ref{t:2}.
\begin{enumerate}
\item
In the deterministic case, Theorem~\ref{t:2}\ref{t:2iii} can be
found in \cite[Corollary~6.5]{Opti04}.
\item
In \cite[Corollary~8]{Atch14}, Problem~\ref{prob:9} is considered
in the special case when $\HH=\RR^N$ and solved via \eqref{e:FB}.
Almost sure convergence properties are established under
the following assumptions: $(\gamma_n)_{n\in\NN}$ is a
decreasing sequence in $\left]0,\vartheta\right]$ such that
$\sum_{n\in\NN}\gamma_n=\pinf$, $\lambda_n\equiv 1$, $a_n\equiv 0$,
and the sequence $(x_n)_{n\in\NN}$ is bounded \emph{a priori}.
\item 
In \cite{Ros14a}, Problem~\ref{prob:1} is addressed using
Algorithm \ref{algo:1}. The authors make the additional
assumptions that 
\begin{equation}
\label{e:unbiased}
(\forall n\in\NN)\quad\EC{u_n}{\XX_n}=\mathsf{B} x_n
\quad\text{and}\quad a_n=0.
\end{equation}
Furthermore they employ vanishing proximal parameters 
$(\gamma_n)_{n\in\NN}$. Almost sure convergence properties of 
the sequence $(x_n)_{n\in\NN}$ are then established under the 
additional assumption that $\mathsf{B}$ is uniformly monotone.
\item
The recently posted paper \cite{Rosa15} employs tools from 
\cite{Siop15} to investigate the convergence of
a variant of \eqref{e:FB} in which no errors 
$(a_n)_{n\in\NN}$ are allowed in the implementation of the 
resolvents, and an inertial term is added, namely,
\begin{multline}
\label{e:ciao}
(\forall n\in\NN) \quad x_{n+1} =
x_n+\lambda_n\big(\mathsf{J}_{\gamma_n\mathsf{A}}
(x_n+\rho_n(x_n-x_{n-1})-\gamma_nu_n)-x_n\big),\\
\text{where}\quad\rho_n\in\left[0,1\right[.
\end{multline}
In the case when $\rho_n\equiv 0$, assertions 
\ref{t:2iii} and \ref{t:2iv}\ref{a:nyc2014-04-03vi} of 
Theorem~\ref{t:2} are obtained under the additional 
hypothesis that $\inf\lambda_n>0$ and the stochastic 
approximations which can be performed are constrained by 
\eqref{e:unbiased}.
\end{enumerate}
\end{remark}

Next, we provide a version of Theorem~\ref{t:1} in which 
a variant of \eqref{e:FB} featuring approximations 
$(\mathsf{A}_n)_{n\in\NN}$ of the operator $\mathsf{A}$ is used.  
In the deterministic forward-backward method, such approximations 
were first used in \cite[Proposition~3.2]{Lema97} (see also 
\cite[Proposition 6.7]{Opti04} for an alternative proof).

\begin{proposition}
\label{p:23}
Consider the setting of Problem~\ref{prob:1}.
Let $x_0$, $(u_n)_{n\in\NN}$, and $(a_n)_{n\in\NN}$ be random 
variables in $L^2(\Omega,\FF,\PP;\HH)$, let 
$(\lambda_n)_{n\in\NN}$ be a sequence in $\rzeroun$, let
$(\gamma_n)_{n\in\NN}$ be a sequence in
$\left]0,2\vartheta\right[$, and let 
$(\mathsf{A}_n)_{n\in\NN}$ be a sequence of maximally monotone
operators from $\HH$ to $2^{\HH}$. Set 
\begin{equation}
\label{e:FBp}
(\forall n\in\NN) \quad x_{n+1} =
x_n+\lambda_n\big(\mathsf{J}_{\gamma_n \mathsf{A}_n}
(x_n-\gamma_nu_n)+a_n-x_n\big).
\end{equation}
Suppose that assumptions \ref{a:t2i-}--\ref{a:t2v} in
Theorem~\ref{t:2} are satisfied, as well as the following: 
\begin{enumerate}[label=\rm(\alph*)]
\setcounter{enumi}{10}
\item 
\label{p:23vi}  
There exist sequences $(\alpha_n)_{n\in \NN}$ and 
$(\beta_n)_{n\in\NN}$ in $\RP$ such that 
$\sum_{n\in\NN}\sqrt{\lambda_n} \alpha_n < \pinf$, 
$\sum_{n\in\NN}\lambda_n\beta_n < \pinf$, and
\begin{equation}
(\forall n\in\NN)(\forall \mathsf{x}\in\HH)\quad
\|\mathsf{J}_{\gamma_n \mathsf{A}_n}\mathsf{x} -
\mathsf{J}_{\gamma_n \mathsf{A}}\mathsf{x}\| 
\leq\alpha_n \|\mathsf{x}\|+\beta_n.
\end{equation}
\end{enumerate}
Then the conclusions of Theorem~\ref{t:2} remain valid.
\end{proposition}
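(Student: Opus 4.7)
The plan is to reduce Proposition~\ref{p:23} to Theorem~\ref{t:2} by absorbing the resolvent approximation error into the perturbation sequence. Set
\begin{equation*}
e_n=\mathsf{J}_{\gamma_n\mathsf{A}_n}(x_n-\gamma_n u_n)-\mathsf{J}_{\gamma_n\mathsf{A}}(x_n-\gamma_n u_n)
\quad\text{and}\quad
\widetilde{a}_n=a_n+e_n,
\end{equation*}
so that iteration \eqref{e:FBp} is exactly iteration \eqref{e:FB} of Algorithm~\ref{algo:1} with $\widetilde{a}_n$ in place of $a_n$. Since only hypothesis \ref{a:t2i} of Theorem~\ref{t:2} involves the perturbation sequence, the conclusion will follow once I verify
\begin{equation*}
\sum_{n\in\NN}\lambda_n\sqrt{\EC{\|\widetilde{a}_n\|^2}{\XX_n}}<\pinf\;\as
\end{equation*}
By the conditional $L^2$-triangle inequality and \ref{a:t2i}, this reduces to $\sum_n\lambda_n\sqrt{\EC{\|e_n\|^2}{\XX_n}}<\pinf$ a.s., and by \ref{p:23vi} it is enough to control $\alpha_n\sqrt{\EC{\|x_n-\gamma_n u_n\|^2}{\XX_n}}$, which a priori contains the unbounded factor $\|x_n\|$.

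I therefore first plan to establish a.s. boundedness of $(x_n)_{n\in\NN}$ by applying Proposition~\ref{p:1}\ref{p:1ii} with $\phi=\Id$ to an $L^1(\XX_n)$-recursion. Splitting $\mathsf{J}_{\gamma_n\mathsf{A}_n}(x_n-\gamma_n u_n)=\mathsf{J}_{\gamma_n\mathsf{A}}(x_n-\gamma_n u_n)+e_n$ in \eqref{e:FBp} and combining the identity $\mathsf{z}=\mathsf{J}_{\gamma_n\mathsf{A}}(\mathsf{z}-\gamma_n\mathsf{B}\mathsf{z})$, the nonexpansivity of $\mathsf{J}_{\gamma_n\mathsf{A}}$, and the bound on $\EC{\|x_n-\gamma_n u_n-\mathsf{z}+\gamma_n\mathsf{B}\mathsf{z}\|^2}{\XX_n}$ derived in the proof of Theorem~\ref{t:2} (namely \eqref{e:thegrandwazoo}, with $d_n=-\gamma_n(\EC{u_n}{\XX_n}-\mathsf{B}x_n)$) will yield
\begin{equation*}
\EC{\|\mathsf{J}_{\gamma_n\mathsf{A}}(x_n-\gamma_n u_n)-\mathsf{z}\|}{\XX_n}\le\|x_n-\mathsf{z}\|+\gamma_n\|\EC{u_n}{\XX_n}-\mathsf{B}x_n\|+\gamma_n\sqrt{\zeta_n(\mathsf{z})}.
\end{equation*}
A parallel computation for $\EC{\|e_n\|}{\XX_n}$, using \ref{p:23vi} and the Lipschitz estimate $\|\mathsf{B}x_n\|\le\vartheta^{-1}\|x_n-\mathsf{z}\|+\|\mathsf{B}\mathsf{z}\|$ implied by $\vartheta$-cocoercivity, will give $\EC{\|e_n\|}{\XX_n}\le K_1\alpha_n\|x_n-\mathsf{z}\|+K_2\alpha_n+\alpha_n\gamma_n\|\EC{u_n}{\XX_n}-\mathsf{B}x_n\|+\alpha_n\gamma_n\sqrt{\zeta_n(\mathsf{z})}+\beta_n$, where the constants $K_1,K_2$ are deterministic and finite by \ref{a:t2iv}. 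Assembling these bounds produces a Robbins--Siegmund inequality
\begin{equation*}
\EC{\|x_{n+1}-\mathsf{z}\|}{\XX_n}\le(1+\chi_n(\mathsf{z}))\,\|x_n-\mathsf{z}\|+\eta_n(\mathsf{z})
\end{equation*}
with $\chi_n(\mathsf{z})=\lambda_n K_1\alpha_n$ and $\eta_n(\mathsf{z})$ a sum of $\lambda_n$-weighted remainders. Using the elementary inequality $\lambda_n\le\sqrt{\lambda_n}$ together with \ref{a:t2i}--\ref{a:t2iii} and \ref{p:23vi}, I will check that $\sum\chi_n(\mathsf{z})<\pinf$ and $\sum\eta_n(\mathsf{z})<\pinf$ a.s., so that Proposition~\ref{p:1}\ref{p:1ii} delivers the a.s. boundedness of $(\|x_n-\mathsf{z}\|)_{n\in\NN}$.

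Once $(x_n)_{n\in\NN}$ is a.s. bounded, an analogous $L^2(\XX_n)$-estimate will control $\sqrt{\EC{\|u_n\|^2}{\XX_n}}$ by an expression of the form $C(\omega)+\|\EC{u_n}{\XX_n}-\mathsf{B}x_n\|+\sqrt{\zeta_n(\mathsf{z})}$; substituting into the bound for $\sqrt{\EC{\|e_n\|^2}{\XX_n}}$ and again using $\lambda_n\le\sqrt{\lambda_n}$, \ref{a:t2ii}, \ref{a:t2iii}, and the summabilities in \ref{p:23vi} will give $\sum_n\lambda_n\sqrt{\EC{\|e_n\|^2}{\XX_n}}<\pinf$ a.s. Hypothesis \ref{a:t2i} is then satisfied by $\widetilde{a}_n$, and Theorem~\ref{t:2} applied to the reformulated iteration yields all the stated conclusions. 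The main obstacle is the circular dependence between the $L^2$ control of $e_n$ and the boundedness of $(x_n)$: the hypothesis $\sum\sqrt{\lambda_n}\alpha_n<\pinf$ is precisely calibrated so that the state-dependent contribution $\alpha_n\|x_n-\mathsf{z}\|$ to $\EC{\|e_n\|}{\XX_n}$ can be absorbed into the multiplicative factor of the $L^1$ Robbins--Siegmund recursion, bootstrapping the boundedness that subsequently legitimizes the $L^2$ analysis.
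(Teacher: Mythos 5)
Your proposal is correct and follows the same two-step architecture as the paper's proof: first bootstrap the almost sure boundedness of $(x_n)_{n\in\NN}$ from an $L^1$ quasi-Fej\'er recursion via Proposition~\ref{p:1}\ref{p:1ii}, then fold the resolvent discrepancy into a modified perturbation $\widetilde{a}_n$ and invoke Theorem~\ref{t:2} for the iteration \eqref{e:FB} driven by $\widetilde{a}_n$. The only substantive difference lies in how the boundedness recursion is assembled: the paper exploits the nonexpansiveness of $\mathsf{J}_{\gamma_n\mathsf{A}_n}$ itself to shift the resolvent comparison to the deterministic point $\mathsf{z}-\gamma_n\mathsf{B}\mathsf{z}$, so that hypothesis \ref{p:23vi} contributes only the additive summable term $\alpha_n\|\mathsf{z}\|+\beta_n$ and the recursion has multiplicative factor exactly $1$; you instead evaluate the discrepancy at the random point $x_n-\gamma_nu_n$ and absorb the resulting state-dependent term $\alpha_n\|x_n-\mathsf{z}\|$ into a factor $1+\lambda_nK_1\alpha_n$, which is summable because $\lambda_n\alpha_n\leq\sqrt{\lambda_n}\,\alpha_n$. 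Both routes are legitimate; the paper's is marginally cleaner since it avoids the multiplicative perturbation altogether, while yours makes explicit that the calibration $\sum_{n\in\NN}\sqrt{\lambda_n}\,\alpha_n<\pinf$ is strong enough even to handle the state-dependent contribution.
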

\begin{proof}
Let $\mathsf{z}\in \mathsf{F}$. We have
\begin{equation}
\label{e:lastchance1}
(\forall n\in\NN)\quad
\|x_{n+1}-\mathsf{z}\|\leq (1-\lambda_n)\|x_n-\mathsf{z}\|
+\lambda_n\|\mathsf{J}_{\gamma_n\mathsf{A}_n}
(x_n-\gamma_nu_n)-\mathsf{z}\|+\lambda_n \|a_n\|.
\end{equation}
In addition, 
\begin{align}
\label{e:lastchance2}
(\forall n\in\NN)\quad
&\|\mathsf{J}_{\gamma_n \mathsf{A}_n}
(x_n-\gamma_nu_n)-\mathsf{z}\|\nonumber\\
& \leq\|\mathsf{J}_{\gamma_n \mathsf{A}_n}
(x_n-\gamma_nu_n)-\mathsf{J}_{\gamma_n \mathsf{A}_n}
(\mathsf{z}-\gamma_n \mathsf{B}\mathsf{z})\|
+\|\mathsf{J}_{\gamma_n \mathsf{A}_n} (\mathsf{z}-\gamma_n
\mathsf{B}\mathsf{z})-\mathsf{J}_{\gamma_n \mathsf{A}}
(\mathsf{z}-\gamma_n \mathsf{B}\mathsf{z})\|\nonumber\\
&\leq\|x_n-\gamma_nu_n-\mathsf{z}+\gamma_n \mathsf{B}\mathsf{z}\|
+\|\mathsf{J}_{\gamma_n \mathsf{A}_n} (\mathsf{z}-\gamma_n
\mathsf{B}\mathsf{z})-\mathsf{J}_{\gamma_n \mathsf{A}}
(\mathsf{z}-\gamma_n \mathsf{B}\mathsf{z})\|\nonumber\\
& \leq\|x_n-\mathsf{z}-\gamma_n  (\mathsf{B} x_n -\mathsf{B}
\mathsf{z})-\gamma_n(u_n-\EC{u_n}{\XX_n})\|
+ \gamma_n\|\EC{u_n}{\XX_n}-\mathsf{B} x_n\|\nonumber\\
&\quad\;+\|\mathsf{J}_{\gamma_n \mathsf{A}_n} (\mathsf{z}-\gamma_n
\mathsf{B}\mathsf{z})-\mathsf{J}_{\gamma_n \mathsf{A}}
(\mathsf{z}-\gamma_n \mathsf{B}\mathsf{z})\|.
\end{align}
On the other hand, using assumptions \ref{a:t2iii} and 
\ref{a:t2iv} in Theorem~\ref{t:2} as well as 
\eqref{e:2015-02-12a}, we obtain as in \eqref{e:thegrandwazoo}
\begin{align}
&(\forall n\in\NN)\quad
\EC{\|x_n-\mathsf{z}-\gamma_n  (\mathsf{B} x_n -\mathsf{B}
\mathsf{z}) -\gamma_n (u_n-\EC{u_n}{\XX_n})\|^2}{\XX_n}\nonumber\\
&\hskip 32mm\leq\|x_n-\mathsf{z}\|^2-
\gamma_n\big(2\vartheta-(1+\tau_n)\gamma_n\big)
\|\mathsf{B}x_n-\mathsf{B}\mathsf{z}\|^2+
\gamma_n^2\zeta_n(\mathsf{z})\nonumber\\
&\hskip 32mm\leq\|x_n-\mathsf{z}\|^2 
+\gamma_n^2\zeta_n(\mathsf{z}),
\end{align}
which implies that
\begin{multline}
\label{e:lastchance3}
(\forall n\in\NN)\quad
\EC{\|x_n-\mathsf{z}-\gamma_n(\mathsf{B}x_n-\mathsf{B}
\mathsf{z})-\gamma_n(u_n-\EC{u_n}{\XX_n})\|}{\XX_n}\\
\leq\|x_n-\mathsf{z}\|+\gamma_n \sqrt{\zeta_n(\mathsf{z})}.
\end{multline}
Combining \eqref{e:lastchance1}, \eqref{e:lastchance2}, and
\eqref{e:lastchance3} yields
\begin{align}
\label{e:zoot-allures}
(\forall n\in\NN)\quad
&\EC{\|x_{n+1}-\mathsf{z}\|}{\XX_n}\nonumber\\
&\leq\|x_n-\mathsf{z}\|+\lambda_n\gamma_n\sqrt{\zeta_n(\mathsf{z})}  
+\lambda_n\gamma_n\|\EC{u_n}{\XX_n}-
\mathsf{B} x_n\|
\nonumber\\ 
&\quad\;+\lambda_n\|\mathsf{J}_{\gamma_n \mathsf{A}_n}
(\mathsf{z}-\gamma_n\mathsf{B}\mathsf{z})-\mathsf{J}_{\gamma_n
\mathsf{A}}(\mathsf{z}-\gamma_n \mathsf{B}\mathsf{z})\|
+\lambda_n \EC{\|a_n\|}{\XX_n}\nonumber\\ 
&\leq\|x_n-\mathsf{z}\|+\gamma_n\sqrt{\lambda_n\zeta_n(\mathsf{z})}  
+\gamma_n\sqrt{\lambda_n}\|\EC{u_n}{\XX_n}-\mathsf{B}x_n\|
\nonumber\\ 
&\quad\;+\lambda_n\|\mathsf{J}_{\gamma_n \mathsf{A}_n}
(\mathsf{z}-\gamma_n\mathsf{B}\mathsf{z})-\mathsf{J}_{\gamma_n
\mathsf{A}}(\mathsf{z}-\gamma_n \mathsf{B}\mathsf{z})\|
+\lambda_n\sqrt{\EC{\|a_n\|^2}{\XX_n}}.
\end{align}
Since \cite[Proposition~4.33]{Livre1} asserts that 
\begin{equation}
\label{e:lumpy-gravy}
\text{the operators $(\Id-\gamma_n\mathsf{B})_{n\in\NN}$ are 
nonexpansive,}
\end{equation}
it follows from \ref{p:23vi} that
\begin{align}
(\forall n\in\NN)\quad
\lambda_n\|\mathsf{J}_{\gamma_n \mathsf{A}_n}
(\mathsf{z}-\gamma_n \mathsf{B}\mathsf{z})-\mathsf{J}_{\gamma_n
\mathsf{A}} (\mathsf{z}-\gamma_n \mathsf{B}\mathsf{z})\|
&\leq\sqrt{\lambda_n}\alpha_n\|\mathsf{z}-\gamma_n
\mathsf{B}\mathsf{z}\|+\lambda_n\beta_n\nonumber\\
&\leq\sqrt{\lambda_n}\alpha_n\|\mathsf{z}\|+\lambda_n\beta_n.
\end{align}
Thus,
\begin{equation}
\label{e:laJAdifz}
\sum_{n\in\NN}\lambda_n\|\mathsf{J}_{\gamma_n \mathsf{A}_n}
(\mathsf{z}-\gamma_n \mathsf{B}\mathsf{z})-\mathsf{J}_{\gamma_n
\mathsf{A}}(\mathsf{z}-\gamma_n\mathsf{B}\mathsf{z})\| < \pinf.
\end{equation}
In view of assumptions \ref{a:t2i-}-\ref{a:t2iv} in
Theorem~\ref{t:2} and \eqref{e:laJAdifz}, we deduce from 
\eqref{e:zoot-allures} and Proposition~\ref{p:1}\ref{p:1ii} 
that $(x_n)_{n\in\NN}$ is almost surely bounded. In turn,
\eqref{e:lumpy-gravy} asserts that 
$(x_n-\gamma_n\mathsf{B} x_n)_{n\in\NN}$ is likewise.
Now set
\begin{equation}
(\forall n\in\NN)\quad\widetilde{a}_n=
\mathsf{J}_{\gamma_n \mathsf{A}_n}
(x_n-\gamma_nu_n)-\mathsf{J}_{\gamma_n\mathsf{A}}
(x_n-\gamma_nu_n)+a_n.
\end{equation}
Then \eqref{e:FBp} can be rewritten as
\begin{equation}
\label{e:Huy9i-23}
(\forall n\in\NN) \quad x_{n+1} =
x_n+\lambda_n\big(\mathsf{J}_{\gamma_n \mathsf{A}}
(x_n-\gamma_nu_n)+\widetilde{a}_n-x_n\big).
\end{equation}
However,
\begin{multline}
(\forall n \in \NN)\quad
\sqrt{\EC{\|\widetilde{a}_n\|^2}{\XX_n}}
\leq\sqrt{\EC{\|\mathsf{J}_{\gamma_n
\mathsf{A}_n}(x_n-\gamma_nu_n)-\mathsf{J}_{\gamma_n
\mathsf{A}}(x_n-\gamma_nu_n)\|^2}{\XX_n}}\\
+\sqrt{\EC{\|a_n\|^2}{\XX_n}}.
\end{multline}
On the other hand, according to \ref{p:23vi}, 
assumption \ref{a:t2iii} in Theorem~\ref{t:2}, and
\eqref{e:lumpy-gravy},
\begin{align}
(\forall n\in\NN)\quad
&\lambda_n\sqrt{\EC{\|\mathsf{J}_{\gamma_n
\mathsf{A}_n}(x_n-\gamma_nu_n)-\mathsf{J}_{\gamma_n
\mathsf{A}}(x_n-\gamma_nu_n)\|^2}{\XX_n}} \nonumber\\
&\leq\lambda_n \sqrt{\EC{(\alpha_n
\|x_n-\gamma_nu_n\|+\beta_n)^2}{\XX_n}}\nonumber\\
&\leq\lambda_n \sqrt{\EC{(\alpha_n\|x_n-\gamma_n\mathsf{B}x_n\|
+\gamma_n\|u_n-\mathsf{B}x_n\|+\beta_n)^2}{\XX_n}}
\nonumber\\
&\leq\lambda_n\alpha_n\big(\|x_n-\gamma_n\mathsf{B}x_n\|+
\gamma_n\sqrt{\EC{\|u_n-\mathsf{B}
x_n\|^2}{\XX_n}}\big)+\lambda_n\beta_n\nonumber\\
&\leq\lambda_n\alpha_n\big(\|x_n-\gamma_n\mathsf{B}x_n\|+
\gamma_n\|\EC{u_n}{\XX_n}-\mathsf{B} x_n\|\nonumber\\
&\quad\;+\gamma_n\sqrt{\EC{\|u_n-\EC{u_n}{\XX_n}\|^2}{\XX_n}}\big)
+\lambda_n\beta_n\nonumber\\
&\leq\lambda_n\alpha_n\big(\|x_n-\gamma_n\mathsf{B}x_n\|+
\gamma_n\|\EC{u_n}{\XX_n}-\mathsf{B} x_n\|+\gamma_n 
\sqrt{\tau_n}\|\mathsf{B} x_n-\mathsf{B} \mathsf{z}\|\nonumber\\
&\quad\;+\gamma_n\sqrt{\zeta_n(\mathsf{z})}\big)+
\lambda_n\beta_n \nonumber\\
&\leq\sqrt{\lambda_n}\alpha_n\big(\|x_n-\gamma_n\mathsf{B}x_n\|+
\gamma_n\sqrt{\lambda_n}\|\EC{u_n}{\XX_n}-\mathsf{B} x_n\|
+\gamma_n\sqrt{\tau_n}\|\mathsf{B}x_n-\mathsf{B}\mathsf{z}\|
\nonumber\\
&\quad\;+\gamma_n
\sqrt{\lambda_n\zeta_n(\mathsf{z})}\big)+\lambda_n\beta_n.
\end{align}
However, assumptions \ref{a:t2ii} and \ref{a:t2iii} in
Theorem~\ref{t:2} guarantee that 
$(\sqrt{\lambda_n}\|\EC{u_n}{\XX_n}-\mathsf{B}x_n\|)_{n\in\NN}$ 
and $(\sqrt{\lambda_n\zeta_n(\mathsf{z})})_{n\in\NN}$ are 
$\as$ bounded. Since $(\mathsf{B} x_n)_{n\in\NN}$ and 
$(x_n-\gamma_n\mathsf{B} x_n)_{n\in\NN}$ are likewise,
it follows from \ref{p:23vi} and \eqref{e:lumpy-gravy} that
\begin{equation}
\sum_{n\in\NN}
\lambda_n\sqrt{\EC{\|\mathsf{J}_{\gamma_n
\mathsf{A}_n}(x_n-\gamma_nu_n)-\mathsf{J}_{\gamma_n
\mathsf{A}}(x_n-\gamma_nu_n)\|^2}{\XX_n}}<\pinf,
\end{equation}
and consequently that
\begin{equation}
\sum_{n\in\NN}\lambda_n\sqrt{\EC{\|\widetilde{a}_n\|^2}{\XX_n}}
<\pinf.
\end{equation}
Applying Theorem~\ref{t:2} to algorithm \eqref{e:Huy9i-23} then
yields the claims.
\end{proof}

\section{Applications} 
\label{sec:5}

As discussed in the Introduction, the forward-backward algorithm is
quite versatile and it can be applied in various forms. Many
standard applications of Theorem~\ref{t:2} can of course be recovered
for specific choices of $\mathsf{A}$ and $\mathsf{B}$, in
particular Problem~\ref{prob:9}. 
Using the product space framework of
\cite{Sico10}, it can also be applied to solve systems of
coupled monotone inclusions. On the other hand, using the approach
proposed in \cite{Svva10,Opti14}, it can be used to solve 
strongly monotone composite inclusions (in particular, strongly
convex composite minimization problems), say, 
\begin{equation}
\label{e:fPrimal}
\text{find}\;\;\mathsf{x}\in\HH\;\;\text{such that}\;\;
\mathsf{z}\in\mathsf{A}\mathsf{x}+\sum_{k=1}^q\mathsf{L}_k^*
\big((\mathsf{B}_k\infconv\mathsf{D}_k)
(\mathsf{L}_k\mathsf{x}-\mathsf{r}_k)\big)+\rho\mathsf{x},
\end{equation}
since their dual problems
assume the general form of Problem~\ref{prob:1} and the primal
solution can trivially be recovered from any dual solution.
In \eqref{e:fPrimal}, $\mathsf{z}\in\HH$, $\rho\in\RPP$ and, 
for every $k\in\{1,\ldots,q\}$, 
$\mathsf{r}_k$ lies in a real Hilbert space $\GG_k$, 
$\mathsf{B}_k\colon\GG_k\to 2^{\GG_k}$ is maximally monotone, 
$\mathsf{D}_k\colon\GG_k\to 2^{\GG_k}$ is maximally monotone and
strongly monotone, 
$\mathsf{B}_k\infconv\mathsf{D}_k=(\mathsf{B}_k^{-1}+
\mathsf{D}_k^{-1})^{-1}$, and $\mathsf{L}_k\in\BL(\HH,\GG_k)$. In
such instances the forward-backward algorithm actually yields a
primal-dual method which produces a sequence converging to
the primal solution (see \cite[Section~5]{Opti14} for details).
Now suppose that, in addition, $\mathsf{C}\colon\HH\to\HH$ is
cocoercive. As in \cite{Svva12}, consider the primal problem
\begin{equation}
\label{e:fprimal}
\text{find}\;\;\mathsf{x}\in\HH\;\;\text{such that}\;\;
\mathsf{z}\in\mathsf{A}\mathsf{x}+\sum_{k=1}^q\mathsf{L}_k^*
\big((\mathsf{B}_k\infconv\mathsf{D}_k)
(\mathsf{L}_k\mathsf{x}-\mathsf{r}_k)\big)+\mathsf{C}\mathsf{x},
\end{equation}
together with the dual problem 
\begin{multline}
\label{e:fdual}
\text{find}\;\;\mathsf{v}_1\in\GG_1,\:\ldots,\:
\mathsf{v}_q\in\GG_q\quad\text{such that}\\
(\forall k\in\{1,\ldots,q\})\quad
-\mathsf{r}_k\in-\mathsf{L}_k^*(\mathsf{A}+\mathsf{C})^{-1}
\bigg(\mathsf{z}-\sum_{l=1}^q\mathsf{L}_{l}^*\mathsf{v}_l\bigg)
+\mathsf{B}_k^{-1}\mathsf{v}_k+\mathsf{D}_k^{-1}\mathsf{v}_k.
\end{multline}
Using renorming techniques in the primal-dual space 
going back to \cite{Heyu12} in the context of finite-dimensional
minimization problems, the primal-dual problem
\eqref{e:fprimal}--\eqref{e:fdual} can be reduced to an instance 
of Problem~\ref{prob:1} \cite{Opti14,Bang13} (see also
\cite{Cond13}) and therefore solved via Theorem~\ref{t:2}. 
Next, we explicitly illustrate an 
application of this approach in the special case when
\eqref{e:fprimal}--\eqref{e:fdual} is a minimization
problem.

\subsection{A stochastic primal-dual minimization method} 
We denote by $\Gamma_0(\HH)$ the class of proper lower 
semicontinuous convex functions. The Moreau subdifferential of 
$\mathsf{f}\in\Gamma_0(\HH)$ is the maximally monotone operator
\begin{equation}
\label{e:subdiff}
\partial\mathsf{f}\colon\HH\to 2^{\HH}\colon\mathsf{x}\mapsto
\menge{\mathsf{u}\in\HH}{(\forall\mathsf{y}\in\HH)\;\;
\scal{\mathsf{y}-\mathsf{x}}{\mathsf{u}}+\mathsf{f}(\mathsf{x})
\leq\mathsf{f}(\mathsf{y})}.
\end{equation}
The inf-convolution of $\mathsf{f}\colon\HH\to\RX$
and $\mathsf{h}\colon\HH\to\RX$ is defined as 
$\mathsf{f}\infconv \mathsf{h}\colon\HH\to\RXX\colon
\mathsf{x}\mapsto\inf_{\mathsf{y}\in\HH}\big(\mathsf{f}(\mathsf{y})
+\mathsf{h}(\mathsf{x}-\mathsf{y})\big)$.
The conjugate of a function $\mathsf{f}\in \Gamma_0(\HH)$ is
the function $\mathsf{f}^*\in\Gamma_0(\HH)$ defined by
$(\forall\mathsf{u}\in\HH)$ $\mathsf{f}^*(\mathsf{u})=
\sup_{\mathsf{x}\in\HH}(\scal{\mathsf{x}}{\mathsf{u}}-
\mathsf{f}(\mathsf{x}))$.
Let $\mathsf{U}$ be a strongly positive self-adjoint operator in
$\BL(\HH)$. The proximity operator of 
$\mathsf{f}\in\Gamma_0(\HH)$ relative to 
the metric induced by $\mathsf{U}$ is 
\begin{equation}
\prox^{\mathsf{U}}_ {\mathsf{f}}\colon\HH\to\HH\colon \mathsf{x}\to
\argmind{\mathsf{y}\in\HH}{\mathsf{f}(\mathsf{y})+\frac12
\|\mathsf{x}-\mathsf{y}\|^2_{\mathsf{U}}},
\end{equation}
where
\begin{equation}
(\forall\mathsf{x}\in\HH)\qquad 
\|\mathsf{x}\|_{\mathsf{U}}=
\sqrt{\scal{\mathsf{x}}{\mathsf{U}\mathsf{x}}}.
\end{equation}
We have $\prox^{\mathsf{U}}_{\mathsf{f}} =
\mathsf{J}_{\mathsf{U}^{-1}\partial\mathsf{f}}$.

We apply Theorem~\ref{t:2} to derive a stochastic version of 
a primal-dual optimization algorithm for solving a multivariate
optimization problem which was first proposed in 
\cite[Section~4]{Svva12}.

\begin{problem}
\label{prob:3}
Let $\mathsf{f}\in \Gamma_0(\HH)$, let $\mathsf{h}\colon\HH\to\RR$ 
be convex and differentiable with a Lipschitz-continuous gradient,
and let $q$ be a strictly positive integer. For every 
$k\in\{1,\ldots,q\}$, let $\GG_k$ be a separable 
Hilbert space, let 
$\mathsf{g}_k\in\Gamma_0(\GG_k)$, 
let $\mathsf{j}_k\in\Gamma_0(\GG_k)$ be strongly convex,
and let $\mathsf{L}_{k}\in \BL(\HH,\GG_k)$. 
Let $\boldsymbol{\GG}=\GG_1\oplus\cdots\oplus\GG_q$ be the
direct Hilbert sum of $\GG_1,\ldots,\GG_q$, and suppose that
there exists $\overline{\mathsf{x}}\in\HH$ such that 
\begin{equation}\label{e:qualif}
0\in\partial\mathsf{f}(\overline{\mathsf{x}})
+\sum_{k=1}^q\mathsf{L}_{k}^*(\partial\mathsf{g}_k\infconv
\partial\mathsf{j}_k)(\mathsf{L}_{k}\overline{\mathsf{x}})
+\nabla\mathsf{h}(\overline{\mathsf{x}}).
\end{equation}
Let $\mathsf{F}$ be the set of solutions to the problem
\begin{equation}
\label{e:primopt}
\minimize{\mathsf{x}\in\HH}
{\mathsf{f}(\mathsf{x})+
\sum_{k=1}^q (\mathsf{g}_k\infconv\mathsf{j}_k)
(\mathsf{L}_{k}\mathsf{x})}+\mathsf{h}(\mathsf{x})
\end{equation}
and let $\boldsymbol{\mathsf{F}}^*$
be the set of  solutions to the dual problem
\begin{equation}
\minimize{\boldsymbol{\mathsf{v}}\in\boldsymbol{\GG}}
{(\mathsf{f}^*\infconv \mathsf{h}^*) \bigg(-\Sum_{k=1}^q
\mathsf{L}_{k}^*\mathsf{v}_{k}\bigg)+\sum_{k=1}^q
\big(\mathsf{g}_k^*(\mathsf{v}_k)+\mathsf{j}_k^*
(\mathsf{v}_k)\big)},
\end{equation}
where we denote by $\boldsymbol{\mathsf{v}}=
(\mathsf{v}_1,\ldots,\mathsf{v}_q)$ a generic 
point in $\boldsymbol{\GG}$.
The problem is to find a point in 
$\mathsf{F}\times\boldsymbol{\mathsf{F}}^*$.
\end{problem}

We address the case when only stochastic approximations of the 
gradients of $\mathsf{h}$ and 
$(\mathsf{j}_k^*)_{1\leq k \leq q}$ and approximations 
of the functions $\mathsf{f}$ are available to solve
Problem~\ref{prob:3}.

\begin{algorithm}
\label{algo:7}
Consider the setting of Problem~\ref{prob:3} and let
$\mathsf{W}\in\BL(\HH)$ be strongly positive and self-adjoint. 
Let $(\mathsf{f}_n)_{n\in\NN}$ be a sequence in $\Gamma_0(\HH)$,
let $(\lambda_n)_{n\in\NN}$ be a sequence in $\left]0,1\right]$ 
such that $\sum_{n\in\NN}\lambda_n=\pinf$, and, for every 
$k\in\{1,\ldots,q\}$, let $\mathsf{U}_k\in\BL(\GG_k)$ be 
strongly positive and self-adjoint.
Let $x_0$, $(u_n)_{n\in\NN}$, and
$(b_n)_{n\in\NN}$ be random variables in $L^2(\Omega,\FF,\PP;\HH)$,
and let $\boldsymbol{v}_0$, $(\boldsymbol{s}_n)_{n\in\NN}$,
and $(\boldsymbol{c}_n)_{n\in\NN}$ be random variables in 
$L^2(\Omega,\FF,\PP;\GGG)$.
Iterate
\begin{equation}
\label{e:PDcoordopt1}
\begin{array}{l}
\text{for}\;n=0,1,\ldots\\
\left\lfloor
\begin{array}{l}
\displaystyle y_{n} =
\prox_{\mathsf{f}_n}^{\mathsf{W}^{-1}}
\bigg(x_{n}-\mathsf{W}\bigg(\sum_{k=1}^q {\mathsf{L}^*_{k} v_{k,n}}
+u_n\bigg)\bigg)+b_{n}\\
x_{n+1}=x_{n}+\lambda_n (y_{n}-x_{n})\\
\text{for}\;k=1,\ldots,q\\
\left\lfloor
\begin{array}{l}
\displaystyle
w_{k,n}=\prox_{\mathsf{g}_{k}^*}^{\mathsf{U}_k^{-1}}
\big(v_{k,n}+\mathsf{U}_k(\mathsf{L}_{k}
(2y_{n}-x_{n})- s_{k,n})\big)+c_{k,n}\\
v_{k,n+1}=v_{k,n}+\lambda_n (w_{k,n}-v_{k,n}).
\end{array}
\right.
\end{array}
\right.\\
\end{array}
\end{equation}
\end{algorithm}

\begin{proposition}
\label{p:3}
Consider the setting of Problem~\ref{prob:3}, let 
$\mathscr{X}=(\XX_n)_{n\in\NN}$ be a sequence of sub-sigma-algebras 
of $\FF$, and let $(x_n)_{n\in\NN}$ and 
$(\boldsymbol{v}_n)_{n\in \NN}$ be sequences generated by 
Algorithm~\ref{algo:7}. Let $\mu\in\RPP$ be a Lipschitz constant 
of the gradient of $\mathsf{h} \circ \mathsf{W}^{1/2}$ and, 
for every $k\in \{1,\ldots,q\}$, let $\nu_k\in\RPP$ be a Lipschitz 
constant of the gradient of $\mathsf{j}_k^*\circ\mathsf{U}_k^{1/2}$.
Assume that the following hold:
\begin{enumerate}[label=\rm(\alph*)]
\item
$(\forall n\!\in\!\NN)$ 
$\sigma(x_{n'},\boldsymbol{v}_{n'})_{0\leq n'\leq n}\subset
\XXX_n\subset\boldsymbol{\XX_{n+1}}$.
\item 
\label{a:p3i}
$\sum_{n\in\NN}\lambda_n\sqrt{\EC{\| b_n\|^2}
{\boldsymbol{\XX}_n}}<\pinf$ and
$\sum_{n\in\NN}\lambda_n\sqrt{\EC{\| \boldsymbol{c}_n\|^2}
{\boldsymbol{\XX}_n}}<\pinf$.
\item 
\label{a:p3ii}
$\sum_{n\in\NN}\sqrt{\lambda_n}\|\EC{u_n}{\boldsymbol{\XX}_n}-
\nabla\mathsf{h}(x_{n}) \|<\pinf$.
\item 
\label{a:p3iii} 
For every $k\in \{1,\ldots,q\}$,
$\sum_{n\in\NN}\sqrt{\lambda_n}
\|\EC{s_{k,n}}{\boldsymbol{\XX}_n}-\nabla
\mathsf{j}_k^*(v_{k,n})\|<\pinf$.
\item 
\label{a:p3iv}
There exists a summable sequence $(\tau_n)_{n\in\NN}$ in $\RP$ such 
that, for every $(\mathsf{x},\boldsymbol{\mathsf{v}})\in \mathsf{F} 
\times \boldsymbol{\mathsf{F}}^*$, there exists 
$\big(\zeta_n(\mathsf{x},\boldsymbol{\mathsf{v}})\big)_{n\in\NN}\in
\ell^\infty_+({\mathscr{X}})$ such that
$\big(\lambda_n\zeta_n(\mathsf{x},
\boldsymbol{\mathsf{v}})\big)_{n\in\NN}\in
\ell_+^{1/2}({\mathscr{X}})$ and
\begin{multline}
\label{e:boundtauetazetaPD}
(\forall n\in\NN)\quad 
\EC{\|u_n-\EC{u_n}{\boldsymbol{\XX}_n}\|^2}{\boldsymbol{\XX}_n}+
\EC{\|\boldsymbol{s}_n-\EC{\boldsymbol{s}_n}{\boldsymbol{\XX}_n}\|^2}
{\boldsymbol{\XX}_n}\\
\leq\tau_n\bigg(\|\nabla\mathsf{h}(x_n)-\nabla\mathsf{h}
(\mathsf{x})\|^2
+\sum_{k=1}^q\|\nabla\mathsf{j}_k^*(v_{k,n})-
\nabla\mathsf{j}_k^*(\mathsf{v}_k)\|^2\bigg)
+\zeta_n(\mathsf{x},\boldsymbol{\mathsf{v}}).
\end{multline}
\item 
\label{a:p3v}
There exist sequences $(\alpha_{n})_{n\in \NN}$ and 
$(\beta_{n})_{n\in \NN}$ in $\RP$ such that 
$\sum_{n\in \NN} \sqrt{\lambda_n} \alpha_{n} < \pinf$, 
$\sum_{n\in\NN} \lambda_n \beta_{n} < \pinf$, and
\begin{equation}
(\forall n\in\NN)(\forall \mathsf{x}\in\HH)\quad
\|\prox_{\mathsf{f}_n}^{\mathsf{W}^{-1}}\mathsf{x} -
\prox_{\mathsf{f}}^{\mathsf{W}^{-1}}\mathsf{x}\| 
\leq\alpha_{n} \|\mathsf{x}\|+\beta_{n}.
\end{equation}
\item  
\label{a:p3vi}
$\max\{\mu,\nu_1,\ldots,\nu_q\}< 
2\left(1-\sqrt{\sum_{k=1}^q \|\mathsf{U}_k^{1/2}
\mathsf{L}_{k}\mathsf{W}^{1/2} \|^2}\right)$.
\end{enumerate}
Then, the following hold for some $\mathsf{F}$-valued random
variable $x$ and some $\boldsymbol{\mathsf{F}}^*$-valued random 
variable $\boldsymbol{v}$:
\begin{enumerate}
\item  
$(x_n)_{n\in\NN}$ converges weakly $\as$ to $x$ and
$(\boldsymbol{v}_n)_{n\in\NN}$ converges weakly almost surely 
to $\boldsymbol{v}$. 
\item  
Suppose that $\nabla\mathsf{h}$ is 
demiregular at every $\mathsf{x}\in\mathsf{F}$.
Then $(x_n)_{n\in\NN}$ converges strongly almost surely to $x$. 
\item
Suppose that there exists $k\in \{1,\ldots,q\}$ such that,
for every $\boldsymbol{\mathsf{v}}\in\boldsymbol{\mathsf{F}}^*$,
$\nabla\mathsf{j}_k^{*}$ is demiregular at $\mathsf{v}_k$.
Then $(v_{k,n})_{n\in\NN}$ converges strongly almost surely to 
$v_k$.
\end{enumerate}
\end{proposition}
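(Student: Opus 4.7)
The plan is to recast \eqref{e:PDcoordopt1} as a stochastic forward-backward iteration on an instance of Problem~\ref{prob:1} set in a renormed primal-dual product Hilbert space, and then invoke Proposition~\ref{p:23} to absorb the approximations of $\mathsf{f}$ by $(\mathsf{f}_n)_{n\in\NN}$.

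First, I would introduce the product space $\boldsymbol{\KKK}=\HH\oplus\boldsymbol{\GGG}$, the maximally monotone operator $\boldsymbol{\mathsf{M}}\colon\boldsymbol{\KKK}\to 2^{\boldsymbol{\KKK}}$ given by
\begin{equation*}
\boldsymbol{\mathsf{M}}(\mathsf{x},\boldsymbol{\mathsf{v}})=
\bigg(\partial\mathsf{f}(\mathsf{x})+\sum_{k=1}^q\mathsf{L}_k^*\mathsf{v}_k\bigg)\times\big(\partial\mathsf{g}_1^*(\mathsf{v}_1)-\mathsf{L}_1\mathsf{x}\big)\times\cdots\times\big(\partial\mathsf{g}_q^*(\mathsf{v}_q)-\mathsf{L}_q\mathsf{x}\big),
\end{equation*}
and the cocoercive operator $\boldsymbol{\mathsf{C}}\colon(\mathsf{x},\boldsymbol{\mathsf{v}})\mapsto(\nabla\mathsf{h}(\mathsf{x}),(\nabla\mathsf{j}_k^*(\mathsf{v}_k))_{1\leq k\leq q})$. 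Standard primal-dual arguments \cite{Opti14,Bang13} show that the zeros of $\boldsymbol{\mathsf{M}}+\boldsymbol{\mathsf{C}}$ are exactly the Kuhn-Tucker pairs of \eqref{e:primopt}, and that \eqref{e:qualif} ensures $\mathsf{F}\times\boldsymbol{\mathsf{F}}^*\neq\emp$. I then renorm $\boldsymbol{\KKK}$ through the self-adjoint operator
\begin{equation*}
\boldsymbol{\mathsf{V}}\colon(\mathsf{x},\boldsymbol{\mathsf{v}})\mapsto\bigg(\mathsf{W}^{-1}\mathsf{x}-\sum_{k=1}^q\mathsf{L}_k^*\mathsf{v}_k,\Big(\mathsf{U}_k^{-1}\mathsf{v}_k-\mathsf{L}_k\mathsf{x}\Big)_{1\leq k\leq q}\bigg);
\end{equation*}
under \ref{a:p3vi}, $\boldsymbol{\mathsf{V}}$ is strongly positive and $\boldsymbol{\mathsf{V}}^{-1}\boldsymbol{\mathsf{C}}$ is $\vartheta$-cocoercive in the associated inner product for some $\vartheta>1/2$. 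Hence $(\boldsymbol{\mathsf{V}}^{-1}\boldsymbol{\mathsf{M}},\boldsymbol{\mathsf{V}}^{-1}\boldsymbol{\mathsf{C}})$ is an instance of Problem~\ref{prob:1} in the renormed space.

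Second, I would show that Algorithm~\ref{algo:7} coincides with \eqref{e:FBp} for this instance with $\gamma_n\equiv 1$. The identity $\prox^{\mathsf{W}^{-1}}_{\mathsf{f}}=\mathsf{J}_{\mathsf{W}\partial\mathsf{f}}$ and its dual analogues identify the primal-dual resolvent $\boldsymbol{\mathsf{J}}_{\boldsymbol{\mathsf{V}}^{-1}\boldsymbol{\mathsf{M}}}$ with the Gauss-Seidel-type composition featuring the over-relaxation $(2y_n-x_n)$ that appears in \eqref{e:PDcoordopt1}; replacing $\mathsf{f}$ by $\mathsf{f}_n$ gives an approximation $\boldsymbol{\mathsf{M}}_n$ of $\boldsymbol{\mathsf{M}}$, and \ref{a:p3v} yields the resolvent approximation bound required in \ref{p:23vi} of Proposition~\ref{p:23}. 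Collecting the stochastic data as $\boldsymbol{u}_n=(u_n,\boldsymbol{s}_n)$ and $\boldsymbol{a}_n=(b_n,\boldsymbol{c}_n)$, assumptions \ref{a:p3i}--\ref{a:p3iv} translate directly into \ref{a:t2i}--\ref{a:t2iii} of Theorem~\ref{t:2}; in particular, block-diagonality of $\boldsymbol{\mathsf{C}}$ makes the right-hand side of \eqref{e:boundtauetazetaPD} coincide, up to the equivalence of the standard and renormed norms on $\boldsymbol{\KKK}$, with $\tau_n\|\boldsymbol{\mathsf{C}}\boldsymbol{x}_n-\boldsymbol{\mathsf{C}}\boldsymbol{\mathsf{z}}\|^2+\zeta_n(\boldsymbol{\mathsf{z}})$ for $\boldsymbol{\mathsf{z}}\in\mathsf{F}\times\boldsymbol{\mathsf{F}}^*$.

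Applying Proposition~\ref{p:23} then produces weak almost sure convergence of $\boldsymbol{x}_n=(x_n,\boldsymbol{v}_n)$ to an $\mathsf{F}\times\boldsymbol{\mathsf{F}}^*$-valued random variable, which delivers (i). For (ii) and (iii), block-diagonality of $\boldsymbol{\mathsf{C}}$ ensures that demiregularity of $\nabla\mathsf{h}$ at a primal component, respectively of $\nabla\mathsf{j}_k^*$ at a dual component, suffices to force strong convergence in the corresponding component via Theorem~\ref{t:2}\ref{t:2iv}\ref{a:nyc2014-04-03vi}. The main obstacle is the second step: verifying block-by-block that \eqref{e:PDcoordopt1} is genuinely \eqref{e:FBp} for the chosen $(\boldsymbol{\mathsf{M}}_n,\boldsymbol{\mathsf{C}},\boldsymbol{\mathsf{V}})$, and pinning down the precise renormed cocoercivity constant of $\boldsymbol{\mathsf{V}}^{-1}\boldsymbol{\mathsf{C}}$ under \ref{a:p3vi}. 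These computations are essentially deterministic and reproduce the primal-dual renorming of \cite{Opti14,Bang13}; the stochastic ingredients require no new estimate beyond those already packaged in Proposition~\ref{p:23}.
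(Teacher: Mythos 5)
Your proposal follows essentially the same route as the paper: recast \eqref{e:PDcoordopt1} as the perturbed forward-backward iteration \eqref{e:FBp} with $\gamma_n\equiv 1$ for the operators $\boldsymbol{\mathsf{V}}^{-1}\boldsymbol{\mathsf{M}}$ and $\boldsymbol{\mathsf{V}}^{-1}\boldsymbol{\mathsf{C}}$ in the renormed primal-dual space, check that \ref{a:p3i}--\ref{a:p3v} translate into the hypotheses of Proposition~\ref{p:23} (with the cocoercivity constant $\vartheta>1/2$ coming from \ref{a:p3vi}, and the summability of $(\tau_n)_{n\in\NN}$ absorbing the norm-equivalence constants so that $\sup_n(1+\widetilde{\tau}_n)<2\vartheta$ after discarding finitely many terms), and conclude via the demiregularity clause of Theorem~\ref{t:2}\ref{t:2iv}. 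This matches the paper's proof in both structure and in the external results it relies on.
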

\begin{proof}
The proof relies on the ability to employ a constant proximal 
parameter in algorithm \eqref{e:FBp}.
Let us define $\KKK=\HH\oplus\GGG$, $\boldsymbol{\mathsf g}
\colon\GGG\to\RX\colon
\boldsymbol{\mathsf{v}}\mapsto\sum_{k=1}^q 
\mathsf{g}_k(\mathsf{v}_k)$, 
$\boldsymbol{\mathsf j}\colon\GGG\to\RX\colon
\boldsymbol{\mathsf{v}}\mapsto
\sum_{k=1}^q \mathsf{j}_k(\mathsf{v}_k)$, 
$\boldsymbol{\mathsf{L}}\colon\HH\to\GGG\colon
{\mathsf{x}}\mapsto\big(\mathsf{L}_{k}
\mathsf{x}\big)_{1\leq k\leq q}$,
and $\boldsymbol{\mathsf{U}}\colon\GGG\to\GGG\colon
\boldsymbol{\mathsf{v}}\mapsto (\mathsf{U}_1
\mathsf{v}_1,\ldots,\mathsf{U}_q\mathsf{v}_q)$.
Let us now introduce the set-valued operator
\begin{equation}
\label{e:maximal1}
\boldsymbol{\mathsf{A}}\colon\KKK\to 2^{\KKK}\colon
({\mathsf{x}},\boldsymbol{\mathsf{v}})\mapsto 
\big(\partial{\mathsf{f}}({\mathsf{x}})+\boldsymbol{\mathsf{L}}^*
\boldsymbol{\mathsf{v}}\big)\times\big(-\boldsymbol{\mathsf{L}}
{\mathsf{x}}+\partial\boldsymbol{\mathsf{g}}^{*}
(\boldsymbol{\mathsf{v}})\big),
\end{equation}
the single-valued operator
\begin{equation}
\label{e:maximal21}
\boldsymbol{\mathsf B}\colon\KKK\to\KKK\colon
({\mathsf{x}},\boldsymbol{\mathsf{v}})\mapsto 
\big(\nabla {\mathsf h}({\mathsf{x}}),
\nabla \boldsymbol{\mathsf j}^{*}(\boldsymbol{\mathsf{v}})\big),
\end{equation}
and the bounded linear operator
\begin{align} 
\label{e:invV1}
\boldsymbol{\mathsf V}\colon\KKK\to\KKK\colon
({\mathsf{x}},\boldsymbol{\mathsf{v}})\mapsto 
\big({\mathsf W}^{-1}
{\mathsf{x}}-\boldsymbol{\mathsf{L}}^*\boldsymbol{\mathsf{v}},
-\boldsymbol{\mathsf{L}}{\mathsf{x}}+\boldsymbol{\mathsf U}^{-1}
\boldsymbol{\mathsf{v}}\big). 
\end{align}
Further, set
\begin{equation}
\label{e:nonnon}
\vartheta=\left(1-\sqrt{\sum_{k=1}^q 
\|\mathsf{U}_k^{1/2} \mathsf{L}_{k}
\mathsf{W}^{1/2} \|^2}\:\right)\min\{\mu^{-1},\nu_1^{-1},
\ldots,\nu_q^{-1}\}
\end{equation}
and 
\begin{equation}
\label{e:X}
(\forall n\in\NN)\quad 
\widetilde{\tau}_n=
\|\boldsymbol{\mathsf{V}}^{-1}\|\,
\|\boldsymbol{\mathsf{V}}\|\tau_n.
\end{equation}
Since \ref{a:p3iv} imposes that 
$\sum_{n\in\NN}\widetilde{\tau_n}<\pinf$, we 
assume without loss of generality that
\begin{equation}
\label{e:CentraleP}
\sup_{n\in\NN}\,\widetilde{\tau_n}<2\vartheta-1.
\end{equation}
In the renormed space $(\KKK,\|\cdot\|_{\boldsymbol{\mathsf{V}}})$,
$\boldsymbol{\mathsf{V}}^{-1}\boldsymbol{\mathsf{A}}$ is
maximally monotone and 
$\boldsymbol{\mathsf{V}}^{-1}\boldsymbol{\mathsf B}$
is cocoercive \cite[Lemma~3.7]{Opti14} with 
cocoercivity constant $\vartheta$ \cite[Lemma~4.3]{Repe15}.
In addition, finding a zero of the sum of these
operators is equivalent to finding a point in $\mathsf{F}\times
\boldsymbol{\mathsf{F}}^*$, and algorithm \eqref{e:FBp}
with $\gamma_n \equiv 1$ for solving this monotone inclusion
problem specializes to \eqref{e:PDcoordopt1} 
(see \cite{Opti14,Repe15} for details), which can thus be 
rewritten as
\begin{equation}
\label{e:FBp2}
(\forall n\in\NN) \quad (x_{n+1},\boldsymbol{v}_{n+1}) =
(x_n,\boldsymbol{v}_n)+
\lambda_n\big(\mathsf{J}_{\boldsymbol{\mathsf{V}}^{-1}
\boldsymbol{\mathsf{A}}_n}
\big((x_n,\boldsymbol{v}_n) -
\boldsymbol{\mathsf{V}}^{-1}(u_n,\boldsymbol{s}_n)\big)
+\boldsymbol{a}_n-(x_n,\boldsymbol{v}_n)\big),
\end{equation}
where
\begin{equation}
(\forall n\in\NN)\quad
\boldsymbol{\mathsf{a}}_n=(b_n,\boldsymbol{c}_n)
\end{equation}
and
\begin{equation}
(\forall n\in\NN)\quad
\boldsymbol{\mathsf{A}}_n\colon\KKK\to 2^{\KKK}
\colon({\mathsf{x}},\boldsymbol{\mathsf{v}})\mapsto
\big(\partial{\mathsf{f}_n}({\mathsf{x}})+\boldsymbol{\mathsf{L}}^*
\boldsymbol{\mathsf{v}}\big) \times\big(-\boldsymbol{\mathsf{L}}
{\mathsf{x}}+\partial\boldsymbol{\mathsf{g}}^{*}
(\boldsymbol{\mathsf{v}})\big).
\end{equation}
Then 
\begin{multline}
\label{e:beforemoulinex}
(\forall n\in\NN)
(\forall ({\mathsf{x}},\boldsymbol{\mathsf{v}})\in \KKK)\quad
\mathsf{J}_{\boldsymbol{\mathsf{V}}^{-1}
\boldsymbol{\mathsf{A}}_n}({\mathsf{x}},
\boldsymbol{\mathsf{v}}) =
\Big(\mathsf{y},
\prox_{\boldsymbol{\mathsf{g}}^{*}}^{\boldsymbol{\mathsf{U}}^{-1}}
\big(\boldsymbol{\mathsf{v}}+\boldsymbol{\mathsf{U}}
\boldsymbol{\mathsf{L}}(2\mathsf{y}-\mathsf{x})\big)\Big),\\
\text{where}\quad
\mathsf{y} = \prox_{\mathsf{f}_n}^{\mathsf{W}^{-1}}(\mathsf{x}-
\mathsf{W}\boldsymbol{\mathsf{L}}^*\boldsymbol{\mathsf{v}}).
\end{multline}
Assumption \ref{a:p3i} is equivalent to $\sum_{n\in\NN}\lambda_n
\sqrt{\EC{\|\boldsymbol{a}_n\|_{\boldsymbol{\mathsf{V}}}^2}
{\boldsymbol{\XX}_n}}<\pinf$, and
assumptions \ref{a:p3ii} and \ref{a:p3iii} imply that
\begin{equation}
\sum_{n\in\NN}\sqrt{\lambda_n}\|\EC{\boldsymbol{\mathsf{V}}^{-1}
(u_n,\boldsymbol{s}_n)}{\boldsymbol{\XX}_n}-
\boldsymbol{\mathsf{V}}^{-1}\boldsymbol{\mathsf B}
(u_n,\boldsymbol{s}_n)\|_{\boldsymbol{\mathsf{V}}}<\pinf.
\end{equation}
For every $(\mathsf{x},\boldsymbol{\mathsf{v}})\in \mathsf{F} 
\times \boldsymbol{\mathsf{F}}^*$, assumption \ref{a:p3iv} yields
\begin{align}
(\forall n\in\NN)\quad 
&\EC{\|\boldsymbol{\mathsf{V}}^{-1}(u_n,\boldsymbol{s}_n)
-\EC{\boldsymbol{\mathsf{V}}^{-1}(u_n,\boldsymbol{s}_n)}
{\boldsymbol{\XX}_n}\|_{\boldsymbol{\mathsf{V}}}^2}
{\boldsymbol{\XX}_n}\nonumber\\
&\le \|\boldsymbol{\mathsf{V}}^{-1}\| \big(
\EC{\|u_n-\EC{u_n}{\boldsymbol{\XX}_n}\|^2}{\boldsymbol{\XX}_n}
+ \EC{\|\boldsymbol{s}_n
-\EC{\boldsymbol{s}_n}{\boldsymbol{\XX}_n}\|^2}
{\boldsymbol{\XX}_n}\big)\nonumber\\
&\leq\|\boldsymbol{\mathsf{V}}^{-1}\|\big(
\tau_n\big(\|\nabla \mathsf{h}(x_n)-\nabla\mathsf{h}(\mathsf{x})\|^2
+\|\nabla \boldsymbol{\mathsf{j}}^*(\boldsymbol{v}_n)-\nabla
\boldsymbol{\mathsf{j}}^*(\boldsymbol{\mathsf{v}})\|^2\big)
+\zeta_n(\mathsf{x},\boldsymbol{\mathsf{v}})\big)\nonumber\\
&\leq\widetilde{\tau}_n\|\boldsymbol{\mathsf{V}}^{-1}
\boldsymbol{\mathsf B}(x_n,\boldsymbol{v}_n)
-\boldsymbol{\mathsf{V}}^{-1}\boldsymbol{\mathsf B}
(\mathsf{x},\boldsymbol{\mathsf{v}})\|_{\boldsymbol{\mathsf{V}}}^2
+\widetilde{\zeta}_n(\mathsf{x},\boldsymbol{\mathsf{v}}),
\end{align}
where
\begin{equation}
(\forall n\in\NN)\quad 
\widetilde{\zeta}_n(\mathsf{x},\boldsymbol{\mathsf{v}})=
\|\boldsymbol{\mathsf{V}}^{-1}\|\,\zeta_n
(\mathsf{x},\boldsymbol{\mathsf{v}}).
\end{equation}
According to assumption \ref{a:p3iv}, 
$\big(\widetilde{\zeta}_n(\mathsf{x},
\boldsymbol{\mathsf{v}})\big)_{n\in\NN}\in
\ell^\infty_+({\mathscr{X}})$, and
$\big(\lambda_n\widetilde{\zeta}_n
(\mathsf{x},\boldsymbol{\mathsf{v}})\big)_{n\in\NN}\in
\ell_+^{1/2}({\mathscr{X}})$.
Now, let $n\in\NN$, let 
$({\mathsf{x}},\boldsymbol{\mathsf{v}})\in\KKK$, and set 
$\widetilde{\mathsf{y}}=\prox_{\mathsf{f}}^{\mathsf{W}^{-1}}
(\mathsf{x}-\mathsf{W}\boldsymbol{\mathsf{L}}^*
\boldsymbol{\mathsf{v}})$. 
By \eqref{e:beforemoulinex} and the nonexpansiveness of 
$\prox_{\boldsymbol{\mathsf{g}}^{*}}^{\boldsymbol{\mathsf{U}}^{-1}}$
in $(\GGG,\|\cdot\|_{\boldsymbol{\mathsf{U}}^{-1}})$, we obtain
\begin{align}
&\|\mathsf{J}_{\boldsymbol{\mathsf{V}}^{-1}
\boldsymbol{\mathsf{A}}_n}({\mathsf{x}},
\boldsymbol{\mathsf{v}})-\mathsf{J}_{\boldsymbol{\mathsf{V}}^{-1}
\boldsymbol{\mathsf{A}}}({\mathsf{x}},
\boldsymbol{\mathsf{v}})\|_{\boldsymbol{\mathsf{V}}}^2\nonumber\\
&\leq\|\boldsymbol{\mathsf{V}}\|
\big(\|\mathsf{y}-\widetilde{\mathsf{y}}\|^2+\big\|
\prox_{\boldsymbol{\mathsf{g}}^{*}}^{\boldsymbol{\mathsf{U}}^{-1}}
(\boldsymbol{\mathsf{v}}+\boldsymbol{\mathsf{U}}
\boldsymbol{\mathsf{L}}(2\mathsf{y}-\mathsf{x}))
-\prox_{\boldsymbol{\mathsf{g}}^{*}}^{\boldsymbol{\mathsf{U}}^{-1}}
(\boldsymbol{\mathsf{v}}+\boldsymbol{\mathsf{U}}
\boldsymbol{\mathsf{L}}(2\widetilde{\mathsf{y}}-\mathsf{x}))
\big\|^2\big)
\nonumber\\
&\leq\|\boldsymbol{\mathsf{V}}\|
\big(\|\mathsf{y}-\widetilde{\mathsf{y}}\|^2+4 
\|\boldsymbol{\mathsf{U}}\boldsymbol{\mathsf{L}}
(\mathsf{y}-\widetilde{\mathsf{y}})
\|_{\boldsymbol{\mathsf{U}}^{-1}}^2\big)
\nonumber\\
&\leq\|\boldsymbol{\mathsf{V}}\|
(1+4\|\boldsymbol{\mathsf{U}}\|
\|\boldsymbol{\mathsf{L}}\|^2)
\|\mathsf{y}-\widetilde{\mathsf{y}}\|^2.
\end{align}
It follows from \ref{a:p3v} that
\begin{align}
\label{e:beforemoulinex2}
&\|\mathsf{J}_{\boldsymbol{\mathsf{V}}^{-1}
\boldsymbol{\mathsf{A}}_n}({\mathsf{x}},\boldsymbol{\mathsf{v}}) 
- \mathsf{J}_{\boldsymbol{\mathsf{V}}^{-1}
\boldsymbol{\mathsf{A}}}({\mathsf{x}},\boldsymbol{\mathsf{v}})
  \|_{\boldsymbol{\mathsf{V}}}\nonumber\\
&\le
\|\boldsymbol{\mathsf{V}}\|^{1/2}\|(1+2\|\boldsymbol{\mathsf{U}}\|^{1/2}
\|\boldsymbol{\mathsf{L}}\|)
\|\prox_{\mathsf{f}_n}^{\mathsf{W}^{-1}}
(\mathsf{x}-\mathsf{W}\boldsymbol{\mathsf{L}}^*
\boldsymbol{\mathsf{v}})-
\prox_{\mathsf{f}}^{\mathsf{W}^{-1}}(\mathsf{x}-\mathsf{W}
\boldsymbol{\mathsf{L}}^*\boldsymbol{\mathsf{v}})\|\nonumber\\
&\leq \|\boldsymbol{\mathsf{V}}\|^{1/2}\|
(1+2\|\boldsymbol{\mathsf{U}}\|^{1/2}
\|\boldsymbol{\mathsf{L}}\|)
(\alpha_n
\|\mathsf{x}-\mathsf{W}\boldsymbol{\mathsf{L}}^*
\boldsymbol{\mathsf{v}}\|+\beta_n)\nonumber\\
&\leq \|\boldsymbol{\mathsf{V}}\|^{1/2}\|
(1+2\|\boldsymbol{\mathsf{U}}\|^{1/2}
\|\boldsymbol{\mathsf{L}}\|)
\big(\alpha_n
(\|\mathsf{x}\|+\|\mathsf{W}\boldsymbol{\mathsf{L}}^*\|
\|\boldsymbol{\mathsf{v}}\|)+\beta_n\big)\nonumber\\
&\le \widetilde{\alpha}_n
\|({\mathsf{x}},\boldsymbol{\mathsf{v}})
\|_{\boldsymbol{\mathsf{V}}}+\widetilde{\beta}_n,
\end{align}
where 
\begin{equation}
\begin{cases}
\widetilde{\alpha}_n=\sqrt{2}
\|\boldsymbol{\mathsf{V}}\|^{1/2}
\|(1+2\|\boldsymbol{\mathsf{U}}\|^{1/2}
\|\boldsymbol{\mathsf{L}}\|)
\max\{1,\|\mathsf{W}\boldsymbol{\mathsf{L}}^*\|\}
\|\boldsymbol{\mathsf{V}}^{-1}\|^{1/2} \alpha_n\\
\widetilde{\beta}_n = \|\boldsymbol{\mathsf{V}}\|^{1/2}
\|(1+2\|\boldsymbol{\mathsf{U}}\|^{1/2}
\|\boldsymbol{\mathsf{L}}\|) \beta_n.
\end{cases}
\end{equation}
Thus, $\sum_{n\in\NN}\sqrt{\lambda_n}\widetilde{\alpha}_{n}<\pinf$ 
and $\sum_{n\in\NN}\lambda_n \widetilde{\beta}_{n} < \pinf$.
Finally, since $\gamma_n\equiv 1$, \eqref{e:CentraleP} implies
that $\sup_{n\in\NN}(1+\widetilde{\tau_n})\gamma_n<2\vartheta$.
All the assumptions of Proposition~\ref{p:23} 
are therefore satisfied for algorithm \eqref{e:FBp2}.
\end{proof}
\newpage
\begin{remark} \
\begin{enumerate}
\item 
Algorithm~\ref{e:PDcoordopt1} can be viewed as a stochastic 
version of the primal-dual algorithm investigated in 
\cite[Example~6.4]{Opti14} when the metric is fixed in the latter.
Particular cases of such fixed metric primal-algorithm can be found
in \cite{Cham11,Icip14,Esse10,Heyu12,Komo15}.
\item 
The same type of primal-dual algorithm is investigated in 
\cite{Bian14,Repe15} in a different context since in those
papers the stochastic nature of the algorithms stems from 
the random activation of blocks of variables.
\end{enumerate}
\end{remark}

\subsection{Example}
We illustrate an implementation of Algorithm~\ref{algo:7} in 
a simple scenario with $\HH=\RR^N$ by constructing an example in
which the gradient approximation conditions are fulfilled.

For every $k\in \{1,\ldots,q\}$ and every $n\in\NN$, set
$s_{k,n}=\nabla \mathsf{j}_k^*(v_{k,n})$ and suppose that
$(y_n)_{n\in\NN}$ is almost surely bounded.
This assumption is satisfied, in particular, if
$\dom \mathsf{f}$ and $(b_n)_{n\in\NN}$ are bounded.
In addition, let 
\begin{equation}
(\forall n\in\NN)\quad\boldsymbol{\XX}_n=
\sigma\big(x_0,\boldsymbol{v}_0,(K_{n'},z_{n'})_{0\leq n'<m_n},
(b_{n'},\boldsymbol{c}_{n'})_{1\leq n'< n}\big),
\end{equation}
where $(m_n)_{n\in\NN}$ is a strictly increasing sequence in $\NN$
such that $m_n=O(n^{1+\delta})$ with 
$\delta\in\RPP$, $(K_n)_{n\in\NN}$ is a sequence of independent and
identically distributed 
(i.i.d.) random matrices of $\RR^{M\times N}$,
and $(z_n)_{n\in\NN}$ is a sequence of i.i.d.\ random vectors of
$\RR^M$. For example, in signal recovery, $(K_n)_{n\in\NN}$ may model
a stochastic degradation operators \cite{Comb89}, 
while $(z_n)_{n\in\NN}$ are observations
related to an unknown signal that we want to estimate.
The variables $(K_n,z_n)_{n\in\NN}$ are supposed to be
independent of
$(b_{n},\boldsymbol{c}_{n})_{n\in\NN}$
and such that $\EE\|K_0\|^4<\pinf$ and $\EE\|z_0\|^4<\pinf$. Set
\begin{equation}
(\forall \mathsf{x}\in \HH)\quad \mathsf{h}(\mathsf{x})=\frac12
\EE\|K_0\mathsf{x}-z_0 \|^2
\end{equation}
and, for every $n\in\NN$, let
\begin{equation}
u_n=\frac{1}{m_{n+1}}\sum_{n'=0}^{m_{n+1}-1} 
K_{n'}^\top (K_{n'} x_n-z_{n'})
\end{equation}
be an empirical estimate of $\nabla\mathsf{h}(x_n)$. 
We assume that $\lambda_n=O(n^{-\kappa})$ where 
$\kappa\in\left]1-\delta,1\right]\cap [0,1]$. We have
\begin{equation}
\label{e:ECunahxQ}
(\forall n \in\NN)\quad
\EC{u_n}{\boldsymbol{\XX}_n}-\nabla \mathsf{h}(x_{n}) 
=\frac{1}{m_{n+1}} \big(Q_{0,m_{n}} x_n-r_{0,m_{n}}\big)
\end{equation}
where, for every $(n_1,n_2)\in\NN^2$ such that $n_1<n_2$,
\begin{equation}
Q_{n_1,n_2} = \sum_{n'=n_1}^{n_2-1} \big(K_{n'}^\top
K_{n'}-\EE(K_0^\top K_0)\big)
\quad\text{and}\quad
r_{n_1,n_2} = \sum_{n'=n_1}^{n_2-1} \big(K_{n'}^\top
z_{n'}-\EE(K_0^\top z_0)\big).
\end{equation}
From the law of iterated logarithm \cite[Section 25.8]{Davi94}, we
have almost surely
\begin{equation}
\varlimsup_{n\to\pinf} \frac{\|Q_{0,m_{n}} \|}{\sqrt{m_{n}
\log(\log(m_{n}))}}<\pinf
\quad\text{and}\quad
\varlimsup_{n\to \pinf} \frac{\|r_{0,m_{n}} \|}{\sqrt{m_{n}
\log(\log(m_{n}))}}< \pinf.
\end{equation}
Since $(y_n)_{n\in\NN}$ is assumed to be bounded,
there exists a $\RP$-valued random variable $\eta$ such that, 
for every $n\in\NN$, $\sup_{n\in\NN}\|y_n\|\leq\eta$.
Therefore, 
\begin{equation}
\label{e:xnboundQ}
(\forall n \in\NN)\quad \|x_n\| \leq \|x_0\|+\eta.
\end{equation}
Altogether, \eqref{e:ECunahxQ}--\eqref{e:xnboundQ} yield
\begin{equation}
\lambda_n\|\EC{u_n}{\boldsymbol{\XX}_n}-\nabla
\mathsf{h}(x_{n})\|^2 
=O\Big(\Frac{\lambda_n m_{n}\log(\log(m_{n}))}{m_{n+1}^2} \Big)=
O\Big(\Frac{\log(\log(n))}{n^{1+\delta+\kappa}} \Big).
\end{equation}
Consequently, assumption \ref{a:p3ii} in 
Proposition~\ref{p:3} holds. In addition, for every $n\in\NN$,
\begin{equation}
u_n-\EC{u_n}{\boldsymbol{\XX}_n}
=\frac{1}{m_{n+1}}\big(Q_{m_n,m_{n+1}}x_n-r_{m_n,m_{n+1}}\big)
\end{equation}
which, by the triangle inequality, implies that
\begin{align}
\EC{\|u_n-\EC{u_n}{\boldsymbol{\XX}_n}\|^2}{\boldsymbol{\XX}_n}
&\leq\frac{1}{m_{n+1}^2} \EC{(\|Q_{m_n,m_{n+1}}\|\,
\|x_n\|+\|r_{m_n,m_{n+1}}\|)^2}{\boldsymbol{\XX}_n}\nonumber\\
&\leq\frac{2}{m_{n+1}^2} \big(\EE\|Q_{m_n,m_{n+1}}\|^2\,
\|x_n\|^2+\EE\|r_{m_n,m_{n+1}}\|^2\big).
\end{align}
Upon invoking the i.i.d. assumptions, we obtain
\begin{equation}
(\forall n \in\NN)\qquad 
\begin{cases}
\EE\|Q_{m_n,m_{n+1}}\|^2=(m_{n+1}-m_n)\EE\|K_{0}^\top
K_{0}-\EE(K_0^\top K_0)\|^2\\
\EE\|r_{m_n,m_{n+1}}\|^2=(m_{n+1}-m_n)\EE\|K_{0}^\top
z_{0}-\EE(K_0^\top z_0)\|^2
\end{cases}
\end{equation}
and it therefore follows from \eqref{e:xnboundQ} that
\begin{equation}
\zeta_n = \EC{\|u_n-\EC{u_n}{\boldsymbol{\XX}_n}\|^2}
{\boldsymbol{\XX}_n}=O\Big(\frac{m_{n+1}-m_n}{m_{n+1}^2}
\Big)=O\Big(\frac{1}{n^{2+\delta}}\Big)
\end{equation}
and
\begin{equation}
\lambda_n\zeta_n=O\Big(\frac{1}{n^{2+\delta+\kappa}}\Big).
\end{equation}
Thus, assumption~\ref{a:p3iv} in Proposition~\ref{p:3} 
holds with $\tau_n\equiv 0$.

\newpage

\end{document}